\newcommand{\psh}[2]{\langle #1 \, , #2 \rangle}
\newcommand{\sumlim}[2]{\sum\limits_{#1}^{#2}}
\newcommand{\R}{\mathbb R}
\newcommand{\N}{\mathbb N}
\newcommand{\C}{\mathbb C}
\DeclareMathOperator*{\argmin}{arg\,min}
\begin{document}
\numberwithin{equation}{section}
\newtheorem{theo}{Theorem}[section]
\newtheorem{prop}[theo]{Proposition}
\newtheorem{note}[theo]{Remark}
\newtheorem{lem}[theo]{Lemma}
\newtheorem{cor}[theo]{Corollary}
\newtheorem{definition}[theo]{Definition}
\newtheorem{assumption}{Assumption}

\theoremstyle{definition}
\newtheorem{example}{Example}[section]

\title{Inversion symmetry of singular values and a new orbital ordering method  in tensor train approximations for \\ quantum chemistry}
\author[1]{Mi-Song Dupuy}
\author[1]{Gero Friesecke}
\affil{Faculty of Mathematics, TU Munich, Germany, {\tt dupuy@ma.tum.de}, {\tt gf@ma.tum.de}}
\date{February 17, 2020}

\renewcommand\Affilfont{\itshape\small}

\maketitle
\begin{abstract}
\begin{small}
The tensor train approximation of electronic wave functions lies  at the core of the QC-DMRG (Quantum Chemistry Density Matrix Renormalization Group) method, a recent state-of-the-art method for numerically solving the $N$-electron Schr\"odinger equation. 
It is well known that the accuracy of TT approximations is governed by the tail of the associated singular values, which in turn strongly depends on the ordering of the one-body basis.

Here we find that the singular values $s_1\ge s_2\ge ... \ge s_d$ of tensors representing ground states of noninteracting Hamiltonians possess a surprising inversion symmetry, $s_1s_d=s_2s_{d-1}${}$=s_3s_{d-2}=...$, thus reducing the tail behaviour to a single hidden invariant, which moreover depends explicitly on the ordering of the basis. For correlated wavefunctions, we find that the tail is upper bounded by a suitable superposition of the invariants. Optimizing the invariants or their superposition thus provides a new ordering scheme for QC-DMRG. 
Numerical tests on simple examples, i.e. linear combinations of a few Slater determinants, show that the new scheme reduces the tail of the singular values by several orders of magnitudes over existing methods, including the widely used Fiedler order.
\end{small}
\end{abstract}

%

Solving the $N$-body electronic Schr\"odinger equation is a formidable numerical challenge which has been recently tackled by tensor methods inspired by schemes used in spin chain theory. 
In the numerical treatment of one-dimensional spin chain systems, the Density Matrix Renormalization Group (DMRG) introduced in \cite{white1992dmrg,white1993dmrg} has become the state-of-the-art method.
As was found out later \cite{ostlund1995thermodynamic,dukelsky1998equivalence}, the eigenfunctions in the DMRG method are tensor trains (TT), also known as matrix product states (MPS) in physics.

By writing the electronic Schr\"odinger equation in the Fock space of the one-body basis functions and taking an occupation number viewpoint, the electronic wave function can be seen as a tensor where the one-body basis functions play the role of spin sites. 
In this setting, it becomes natural to transfer ideas of the DMRG method. The resultant method is known as Quantum Chemistry-DMRG (QC-DMRG)   \cite{white1999abinitio,mitrushenkov2001quantum,chan2002highly,legeza2003controlling} and has recently been implemented in \cite{Wouters2014CheMPS2,chan2015block-dmrg,keller2015qcmaquis,legeza2018qc-dmrg-budapest}. For a general introduction to the method see \cite{schneider2014tensor}. 
The ground state electronic wave function in QC-DMRG is given by a TT, for which it is well-known that the quality of its approximation is governed by the tail behaviour of the singular values of the matricizations of the original tensor \cite{hackbusch2012tensor}. 
While an interesting mathematical literature on the singular values of tensors is emerging \cite{Hackbusch2017interconnection,shi2018numericalranks,griebel2019analysis}, 
theoretical understanding of the following fundamental questions is still lacking: \\
-- {\it How does the tail of the singular values behave for ground and excited states of realistic quantum} \\
\textcolor{white}{--} {\it chemical systems?} \\
-- {\it How to choose a good network to approximate the states of interest?} \\
The latter is a central issue not just in QC-DMRG but in tensor approximations generally. When the network is a tensor train associated with a given one-particle basis, it reduces to: {\it how to choose the ordering of the basis?} 

The goal of this paper is to study these questions in detail for a simplified but important class of quantum states, namely ground states of non-interacting Hamiltonians, which are given by Slater determinants. 
We establish a surprising inversion symmetry of the singular values which reveals that the tail behaviour is determined by a single hidden invariant. The latter depends explicitly on the ordering of the background basis.
By optimizing the invariant with respect to the orbital ordering, improvements  by four to five orders of magnitude of the decay of the singular values are typically  achieved; see Figure \ref{fig:slater-comparison} in section \ref{sec:testslat}. We term this method {\it best prefactor ordering}. By contrast, the widely used Fiedler order, which is based on an entanglement analysis of the basis \cite{barcza2011quantum}, only gives an improvement by one order of magnitude. 

We then show that the tail size for general superpositions of Slater determinants is upper-bounded by a suitable superposition of the invariants, and propose a corresponding ordering scheme in the general case.
Our ordering scheme is tested on linear combinations of a few Slater determinants which capture important features of typical ground and excited state wave functions in quantum chemistry. 
Our numerical results in section \ref{sec:testcorr} show that it outperforms all existing methods, including the Fiedler order.

\section{The tensor-train decomposition in quantum chemistry}
The goal of this section is to recall how TT approximations in quantum chemistry are set up and to introduce notation.

\subsection{The electronic Schr\"odinger equation}

A quantum mechanical system of $N$ non-relativistic electrons is completely described by a wave function $\Psi$ depending on $3N$ spatial variables $\mathbf{r}_i \in \R$, $i=1,\dots,N$ and discrete spin variables $s_i \in \lbrace \pm \frac{1}{2} \rbrace$ , $i=1,\dots,N$
\begin{equation}
    \Psi \ : 
    \begin{cases}
    (\R^{3} \times \lbrace \pm \frac{1}{2} \rbrace)^N & \to \mathbb{C} \\
    (\mathbf{r}_1,s_1; \dots; \mathbf{r}_N,s_N) & \mapsto \Psi(\mathbf{r}_1,s_1; \dots;\mathbf{r}_N,s_N).
    \end{cases}
\end{equation}
The Pauli exclusion principle states that the wave function of electrons must be antisymmetric with respect to permutations of variables,
\begin{equation} \label{anti}
    \Psi(\dots; \mathbf{r}_i, {s}_i; \dots;\mathbf{r}_j, {s}_j; \dots) = - \Psi(\dots; \mathbf{r}_j, {s}_j; \dots;\mathbf{r}_i, {s}_i; \dots).
\end{equation}
The wave function $\Psi$ belongs to the Hilbert space $\bigwedge_{i=1}^N L^2(\R^3 \times \lbrace \pm \frac{1}{2} \rbrace) = \{\Psi \in L^2((\R^3\times\lbrace\pm\frac12\rbrace)^N) \, : \, \eqref{anti} \}$.

The central goal of quantum chemistry is to numerically solve the electronic Schr\"odinger equation 
\begin{equation}
 H \Psi = E \Psi.   
\end{equation}
Here $H$ is a partial differential operator of the form
\begin{equation}
    H = \sumlim{i=1}{N} \left( -\frac{1}{2} \Delta_{\mathbf{r}_i} + v(\mathbf{r}_i) \right)
       + \sumlim{1 \leq i <j \leq N}{} v_{ee}(\mathbf{r}_i-\mathbf{r}_j)
\end{equation}
and $v\, : \, \R^3\to\R$, $v_{ee}\, : \, \R^3\to\R$ are suitable potentials. Explicitly, for a molecule with $M$ atomic nuclei at positions $\mathbf{R_I}\in\R^3$ and with charges $Z_I>0$ ($I=1,...,M$), 
$v(\mathbf{r})= - \sum_{I=1}^{M} {Z_I}/|\mathbf{R}_I - \mathbf{r}_i|$ (electron-nuclei interaction) and $v_{ee}(\mathbf{r}_i-\mathbf{r}_j)=1/|\mathbf{r}_i-\mathbf{r}_j|$ (electron-electron interaction). 

By Zhislin's theorem (see \cite{friesecke2003mcscf} for a short proof), the operator $H$ acting on $\bigwedge_{i=1}^N L^2(\R^3 \times \lbrace \pm \frac{1}{2} \rbrace)$ with domain $\bigwedge_{i=1}^N H^2(\R^3 \times \lbrace \pm \frac{1}{2} \rbrace)$ has countably many discrete eigenvalues below its essential spectrum if $\sum_{I=1}^{M} Z_I > N-1$ (in particular when the system is neutral). 

It is of particular interest to calculate the lowest eigenvalue and eigenstate of $H$, called ground-state energy respectively ground state of the system, which satisfy the Rayleigh-Ritz variational principle 
\begin{align}
    E_0 &= \min \lbrace \psh{\Psi}{H \Psi} \ : \ \psh{\Psi}{\Psi} = 1, \ \Psi \in \mathcal{V}_N \rbrace, \label{RR1} \\
    \Psi_0 &= \argmin \lbrace \psh{\Psi}{H \Psi} \ : \ \psh{\Psi}{\Psi} = 1, \ \Psi \in \mathcal{V}_N 
    \rbrace, \label{RR2}
\end{align}
where $\langle \cdot , \cdot\rangle$ is the inner product on $L^2((\R^3\times\{\pm\frac12\})^N)$ and $\mathcal{V}_N$ is the variational space
$
    \mathcal{V}_N = \bigwedge_{i=1}^N H^1(\R^3 \times \lbrace \pm \tfrac{1}{2} \rbrace).
$
%
%
\subsection{Full configuration interaction} \label{sec:FCI}

Starting point of most computational methods for \eqref{RR1}--\eqref{RR2} is the following ``folklore theorem'' which could be made rigorous e.g. for the molecular potential $v$ above.
\begin{prop}  
\label{prop:slater_ground_state_noninteracting_hamiltonian}
If $H=\sum_{i=1}^N(-\frac12 \Delta_{\mathbf{r}_i} + v(\mathbf{r}_i))$, i.e. when the electron-electron potential $v_{ee}$ is zero, there exists a solution $\Psi_0$ to \eqref{RR2} which has the form of a Slater determinant $\Psi_0=|\psi_1,...,\psi_N\rangle$ (see \eqref{Slater} below) for some functions $\psi_1,...,\psi_N\in H^1(\R^3\times\{\pm\frac12\} )$. 
\end{prop}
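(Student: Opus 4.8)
The plan is to reduce the $N$-body minimization to a one-body spectral problem. Since $H=\sum_{i=1}^N h_i$ with $h=-\tfrac12\Delta+v$ a one-body Schr\"odinger operator on $L^2(\R^3\times\{\pm\tfrac12\})$, and since under the neutrality-type hypothesis $h$ has (by the same Zhislin-type argument, or by standard results for Schr\"odinger operators with Coulombic $v$) at least $N$ eigenvalues $\lambda_1\le\lambda_2\le\cdots\le\lambda_N$ below its essential spectrum, with orthonormal eigenfunctions $\psi_1,\dots,\psi_N\in H^1$, I would first set $\Psi_0=|\psi_1,\dots,\psi_N\rangle$ and compute $\langle\Psi_0,H\Psi_0\rangle=\sum_{i=1}^N\lambda_i$ using the Slater--Condon rules for a sum of one-body operators.

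Next I would prove this is optimal. The cleanest route is the one-body reduced density matrix: for any normalized $\Psi\in\mathcal V_N$, write $\langle\Psi,H\Psi\rangle=\Tr(h\,\gamma_\Psi)$ where $\gamma_\Psi$ is the one-body density operator, which satisfies $0\le\gamma_\Psi\le 1$ (as an operator inequality, a consequence of antisymmetry/the Pauli principle) and $\Tr\gamma_\Psi=N$. Minimizing $\Tr(h\,\gamma)$ over all such $\gamma$ is a linear program whose solution is the spectral projector onto the $N$ lowest eigenspaces of $h$, giving $\min\Tr(h\gamma)=\sum_{i=1}^N\lambda_i$; hence $\langle\Psi,H\Psi\rangle\ge\sum_{i=1}^N\lambda_i$ for all admissible $\Psi$, with equality for the Slater determinant above. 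This simultaneously shows $E_0=\sum_{i=1}^N\lambda_i$ and that $\Psi_0$ attains it, establishing \eqref{RR2}. One should also note the infimum is actually attained (so $\Psi_0\in\mathcal V_N$ is a genuine minimizer, not merely an infimizing sequence), which follows because the $\psi_i$ are honest $H^1$ eigenfunctions with exponential decay.

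An alternative, more elementary argument avoiding density matrices: expand an arbitrary $\Psi$ in the Slater-determinant basis built from a complete orthonormal set of eigenfunctions of $h$ (assuming purely discrete spectrum, or handling the essential spectrum by a limiting argument), so that $\langle\Psi,H\Psi\rangle=\sum_{\text{occ sets }I}|c_I|^2\big(\sum_{i\in I}\lambda_i\big)$; since each $\sum_{i\in I}\lambda_i\ge\lambda_1+\cdots+\lambda_N$ and $\sum|c_I|^2=1$, the bound follows, with equality forcing $\Psi$ to be supported on the ground configuration. I would present the density-matrix version as the main line since it is coordinate-free and robust.

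The main obstacle is purely the functional-analytic bookkeeping rather than the algebra: one must know that $h$ has at least $N$ discrete eigenvalues below $\sigma_{\mathrm{ess}}(h)$ — which for the molecular $v$ holds under a condition like $\sum_I Z_I>N-1$ and is exactly the kind of fact the paper flags as ``could be made rigorous'' — and one must justify $\Tr(h\gamma_\Psi)$ being well-defined and the operator bound $\gamma_\Psi\le 1$ on the form domain, plus the spectral-projector characterization of the linear minimization when $h$ is unbounded below on its essential spectrum only in the sense of having finitely many eigenvalues below it. For a general abstract $v$ these hypotheses can fail (e.g.\ if $h$ has fewer than $N$ bound states), which is precisely why the statement is phrased as folklore; I would therefore state the one-body spectral hypothesis explicitly and carry out the two-line variational argument under it.
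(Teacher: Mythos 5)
The paper gives no proof of Proposition~\ref{prop:slater_ground_state_noninteracting_hamiltonian}: it is explicitly labelled a ``folklore theorem'' which ``could be made rigorous,'' and the only indication of a proof is the sentence that follows, saying that the $\psi_i$ can be taken to be the lowest $N$ eigenstates of $h=-\tfrac12\Delta+v$. Your proposal is therefore not competing with an existing argument in the paper but supplying one, and it is a correct and standard way to do so: the density-matrix route via $\langle\Psi,H\Psi\rangle=\Tr(h\,\gamma_\Psi)$ together with Coleman's constraint $0\le\gamma_\Psi\le1$, $\Tr\gamma_\Psi=N$, reduces the problem to a linear minimization whose solution is the spectral projector onto the lowest $N$ eigenspaces of $h$, and equality for the Slater determinant of those eigenfunctions follows immediately. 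Your alternative expansion in determinants built from eigenfunctions of $h$ is also correct, and you rightly flag the extra care needed near the essential spectrum. Two small remarks. First, you do not need a Zhislin-type argument for the one-body operator: for the molecular $v(\mathbf{r})=-\sum_I Z_I/|\mathbf{R}_I-\mathbf{r}|$ the operator $h$ has an attractive Coulomb tail and hence infinitely many eigenvalues below $\inf\sigma_{\mathrm{ess}}(h)=0$, so the existence of $N$ bound states is automatic for any $N$; the condition $\sum_I Z_I>N-1$ in the paper is about the $N$-body operator $H$ with interaction, not the noninteracting $h$. Second, since the proposition only asserts \emph{existence} of a Slater-determinant minimizer, you need not worry about uniqueness or about the degenerate case $\lambda_N=\lambda_{N+1}$: as long as $\lambda_N<\inf\sigma_{\mathrm{ess}}(h)$, the determinant of the first $N$ eigenfunctions attains the minimum, which is all that is claimed.
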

Explicitly, the $\psi_i$ ($i=1,...,N$) could be taken as the lowest $N$ eigenstates of the one-body operator $h=-\frac12 \Delta_{\mathbf{r}} + v(\mathbf{r})$ acting on $L^2(\R^2\times\{\pm\frac12\} )$ with domain $H^2(\R^3\times \{\pm\frac12\} )$. Slater determinants are special $N$-electron wavefunctions in $\mathcal{V}_N$ which have the form
\begin{equation} \label{Slater}
      \Psi (r_1,s_1,...,r_N,s_N) = |\psi_1,...,\psi_N\rangle (r_1,s_1,...,r_N,s_N) = 
      \frac{1}{\sqrt{N!}} \det (\psi_i(\mathbf{r}_j,s_j))_{i,j=1}^N
\end{equation}
for some functions $\psi_1,...,\psi_N\in H^1(\R^3\times\{\pm \frac12\} )$ which are orthonormal in $L^2(\R^3\times\{\pm\frac12\})$. 

When $v_{ee}$ is nonzero, this result no longer holds, but minimizing the functional in \eqref{RR1}--\eqref{RR2} over Slater determinants often gives a reasonable first approximation to the ground state, the so-called Hartree-Fock determinant. 

General elements of $\mathcal{V}_N$ can be expanded in a basis of Slater determinants, leading to the method of full configuration interaction (FCI). In this method (see, e.g., \cite{helgaker2014molecular}), one starts from a countable set $\{\varphi_i\}_{i=1}^\infty$ of functions in $H^1(\R^3\times\{\pm\frac12\} )$ such that $\mathrm{Span}\{\varphi_i\}_{i=1}^\infty = H^1(\R^3 \times \lbrace \pm \frac{1}{2} \rbrace)$. One then truncates this set to a finite set 
$$
            \{\varphi_i\}_{1 \leq i \leq L}
$$
of these single-particle functions, known in quantum chemistry as a {\it single-particle basis set}, and denotes 
$
   \mathcal{V}_1^L =\mbox{Span}\{\varphi_1,...,\varphi_L\} \subset H^1(\R^3\times\{\pm\frac12\}).
$
In practice the first $N$ elements of this set are almost always taken as the single-particle functions $\psi_i$ which appear in a numerically computed Hartree-Fock determinant (these functions are called canonical orbitals), but the following applies to arbitrary single-particle basis sets.    

Let $1\leq i_1 < \dots < i_N \leq L$ be $N$ different indices, and denote the resulting Slater determinant by 
\begin{equation}
  |\varphi_{i_1},...,\varphi_{i_N}\rangle =: \Phi_{[i_1,\dots,i_N]}. 
\end{equation}
The full configuration interaction (full CI) space of an $N$-electron system associated with the  single-particle space $\mathcal{V}_1^L$ is the finite dimensional space spanned by all the above Slater determinants, i.e.
\[
\mathcal{V}^L_N = \mbox{Span}\lbrace \Phi_{[i_1,\dots,i_N]} \ : \ 1 \leq i_1 < \dots < i_N \leq L \rbrace \subset \mathcal{V}_N.
\]
The single particle functions $\varphi_i$ are orthonormal, hence the family of Slater determinants \linebreak $(\Phi_{[i_1,\dots,i_N]})_{1 \leq i_1 < \dots < i_N \leq N}$ also forms an orthonormal family. The dimension of the approximation space $\mathcal{V}^L_N$ is thus $\binom{L}{N}\sim L^N$.

The full CI approximation of \eqref{RR1}--\eqref{RR2} using the variational subspace $\mathcal{V}^L_N$ scales combinatorially with the number of electrons. It is thus only tractable for systems with a small number of electrons.

\subsection{Fock space and occupation representation} \label{sec:Fock}

The full CI space $\mathcal{V}^L_N$ is embedded into a larger space $\mathcal{F}^L$ called discrete Fock space,
\begin{equation}
    \mathcal{F}^L := \bigoplus\limits_{M=0}^L \mathcal{V}^L_M.
\end{equation}
The elements of $\mathcal{F}^L$ are of form $\Psi_0\oplus\Psi_1\oplus \cdots \oplus \Psi_L$, where $\Psi_M$ is an $M$-particle wavefunction belonging to $\mathcal{V}^L_M$. The dimension of $\mathcal{F}^L$ is 
$\dim \mathcal{F}^L = \sumlim{M=0}{L} \binom{L}{M} = 2^L$. 

In tensor-train approximations and QC-DMRG, an important role is played by the following  alternative representation of elements of the Fock space. This representation is particularly simple for the Slater determinants $\Phi_{[i_1,...,i_N]}$, where it just corresponds to a labelling by a binary string $(\mu_1,...,\mu_L)\in\{0,1\}^L$ indicating the presence of absence of the orbital $\varphi_i$. In order to allow linear combinations, i.e. quantum superpositions, we now associate, with any binary string $(\mu_1,...,\mu_L)\in\{0,1\}^L$, the element
\begin{equation}
   \Upphi_{(\mu_1,...,\mu_L)} = \chi_{\mu_1} \otimes \chi_{\mu_2} \otimes  \cdots \otimes \chi_{\mu_L} \in \bigotimes_{i=1}^L \C^2
\end{equation}
where $\{\chi_0,\chi_1\}$ is an orthonormal basis of $\C^2$. The state $\Upphi_{(\mu_1,...,\mu_L)}$ has the same information content as the binary string since the $i^{th}$ tensor factor $\chi_{\mu_i}$ just indicates whether or not the orbital $\varphi_i$ is occupied in the original Slater determiant $\Phi_{[i_1,...,i_N]}$: if $\chi_{\mu_i}=\chi_1$ (resp. $\chi_0$) then $\varphi_i$ is occupied (resp. unoccupied). 

With a general $N$-electron wavefunction in the full CI space $\mathcal{V}_N^L$, 
$\Psi = \sum\limits_{1\le i_1< ... < i_N\le L} c_{i_1...i_N} \Phi_{[i_1,...,i_N]}$, we now associate the element
\begin{equation} \label{occrep1}
    \Uppsi = \sum_{\mu_1=0}^1 \dots \sum_{\mu_L=0}^1 \Uppsi_{\mu_1,...,\mu_L} \Phi_{(\mu_1,...,\mu_L)} \in \bigotimes_{i=1}^L \C^2
\end{equation}
where
\begin{equation} \label{occrep2}
     \Uppsi_{\mu_1,...,\mu_L} = \begin{cases} 0 & \mbox{if }\sum_{i=1}^L\mu_i \neq N \\
                        c_{i_1...i_N} & \mbox{if }\mu_i=1 \mbox{ precisely when }i\in\{i_1,...,i_N\}, \, i_1<...<i_N. \end{cases}
\end{equation}
We call \eqref{occrep1}--\eqref{occrep2} the {\it occupation representation} of the wavefunction $\Psi$. By the normalization of $\Psi$, $\sum_{\mu_1=0}^{1} \dots \sum_{\mu_L=0}^{1} |\Uppsi_{\mu_1,\dots,\mu_L}|^2 = 1$. 
The fact that the coefficients $\Uppsi_{(\mu_1,\dots,\mu_L)}$ are zero when $\sum_{i=1}^L\mu_i\neq N$, which comes from the fact that $\Psi$ is an $N$-electron wavefunction, is crucial to bound the bond dimension of the tensor-train representation of $\Uppsi$ in Section~\ref{sec:proofs}.

We remark that numerical implementations of QC-DMRG typically use a slight modification of \eqref{occrep1}--\eqref{occrep2} in which the basis set is assumed to be of form $\{\varphi_1\!\!\uparrow,\varphi_1\!\!\downarrow,...,\varphi_{L/2}\!\uparrow,\varphi_{L/2}\!\downarrow\}$ with spatial orbitals $\varphi_i\in H^1(\R^3)$, where  $(\varphi_i)\!\uparrow)(r,s)=\varphi_i(r)\delta_{1/2}(s)$, $(\varphi_i\!\downarrow)(r,s)=\varphi_i(r)\delta_{-1/2}(s)$. One then works, instead of $\otimes_{\ell=1}^L \C^2$, in the space $\otimes_{\ell=1}^{L/2}\C^4$, with $\C^4$ being the span of the four basis functions $\chi_{-}$, $\chi_\uparrow$, $\chi_\downarrow$, $\chi_{\uparrow\downarrow}$. This basis represents the four occupation possibilities of the spatial orbital $\varphi_i$ (absent, present with upspin only, present with downspin only, present both with upspin and downspin). The results in this paper could easily be adapted to this setting.

Because the representation \eqref{occrep1}--\eqref{occrep2} of a quantum wavefunction is somewhat abstract, we give an example. The example will be revisited later to compare our new orbital ordering scheme to previous methods including the Fiedler order.
\begin{example}[Minimal basis H$_2$]
This is an example with $N=2$ and $L=4$. Consider an H$_2$ molecule with nuclei clamped at $R_A$ and $R_B$, and single-particle space $\mathcal{V}_1^L$ given by the Span of the two $1s$ orbitals $\chi_A(r)=e^{-|r-R_A|}/\sqrt{\pi}$, $\chi_B(r)=e^{-|r-R_B|}/\sqrt{\pi}$ of the individual H atoms multiplied by the spin functions $\delta_{\pm 1/2}(s)$. To obtain a canonical orthonormal basis of $\mathcal{V}_1^L$, let $\varphi_A$, $\varphi_B$ be the associated bonding respectively antibonding orbitals, 
$$
    \varphi_A(r)=\frac{\chi_A+\chi_B}{\sqrt{2+2S_{AB}}}, \;\;\;
    \varphi_B(r)=\frac{\chi_A-\chi_B}{\sqrt{2-2S_{AB}}}, \;\;\;
$$
where $S_{AB}$ is the overlap integral $\int_{\R^3} \chi_A(r)\chi_B(r)\, dr$. Our single-particle basis is then $\{\varphi_A\uparrow, \varphi_A\downarrow, \varphi_B\uparrow, \varphi_B\downarrow\}$. Consider now the Slater determinant
\begin{equation} \label{H2Slater}
    \Psi = \Big| (c\varphi_A + s \varphi_B)\uparrow, \, (c'\varphi_A + s'\varphi_B)\downarrow \Big\rangle
\end{equation}
for some coefficients $c$, $s$, $c'$, $s'\in\R$ with $c^2+s^2=c'{}^2+s'{}^2=1$. We note that the unrestricted Hartree-Fock (UHF) ground state of minimal-basis H$_2$ has the above form, for any bondlength $R=|R_A-R_B|$; moreover $(c,s)\neq (c',s')$ when $R$ is large \cite{szabo1982modern}. The occupation representation \eqref{occrep1}--\eqref{occrep2} with respect to our single-particle basis is, by expanding $\Psi$ and using $|\varphi_B\uparrow,\varphi_B\downarrow\rangle = - |\varphi_A\uparrow,\varphi_B\downarrow\rangle$,  
\begin{equation} \label{H2Slateroccrep}
     \Uppsi = cc'\Phi_{(1100)} + cs'\Phi_{(1001)} -sc'\Phi_{(0110)} + ss'\Phi_{(0011)} \in \bigotimes_{i=1}^4 \C^2. 
\end{equation}
\end{example}

In general, the tensor $\Uppsi=(\Uppsi_{\mu_1 ...\mu_L})_{\mu_1,...,\mu_L=0}^1$ belongs to the tensor product space $\bigotimes_{i=1}^L \mathbb{C}^2$, which has dimension $2^L$. The high dimension of this space urges one to investigate compact representations of these tensors; see section \ref{sec:TT}. 
We also emphasize that the occupation representation of an $N$-particle wavefunction in $\mathcal{V}_N^L$, and its entanglement, depends on the choice and ordering of the one-particle basis $\{\varphi_1,...,\varphi_L\}$ of $\mathcal{V}_1^L$. This issue is taken up in section \ref{sec:ordering}.

\subsection{Tensor-train decomposition} \label{sec:TT}

Tensor-trains (TT), also called matrix product states (MPS) in physics, are states $(\Uppsi_{\mu_1,...,\mu_L})$ of the following form:
\begin{align}
    \label{eq:tensor-train-decomposition}
    \forall \, (\mu_1,\dots,\mu_L) \in \{0,1\}^L, \ \Uppsi_{\mu_1,\dots,\mu_L} &= A_1[\mu_1] A_2[\mu_2] \dots A_L[\mu_L] \\
    &= \sum_{\alpha_1=1}^{r_1} \sum_{\alpha_2=1}^{r_2} ... \!\! \sum_{\alpha_{L-1}=1}^{r_{L-1}}\!\!
     (A_1[\mu_1])_{\alpha_1} (A_2[\mu_2])_{\alpha_1\alpha_2} \cdots 
     (A_L[\mu_L])_{\alpha_{L-1}}. \nonumber 
\end{align}
For each $(\mu_1,\dots,\mu_L) \in \{0,1\}^L$, $A_1[\mu_1]$ is a row vector of size $r_1$, for $k=2,\dots,L-1$, $A_k[\mu_k]$ is a $r_{k-1} \times r_k$ matrix, and $A[\mu_L]$ is a column vector of size $r_{L-1}$. See Figure \ref{F:TT} for a graphical representation. The number $r = \max\{r_k \, : \,  k=1,\dots,L-1\}$ is called the bond dimension.

\begin{figure}[http!]
\begin{center}
\includegraphics[width=0.75\textwidth ]{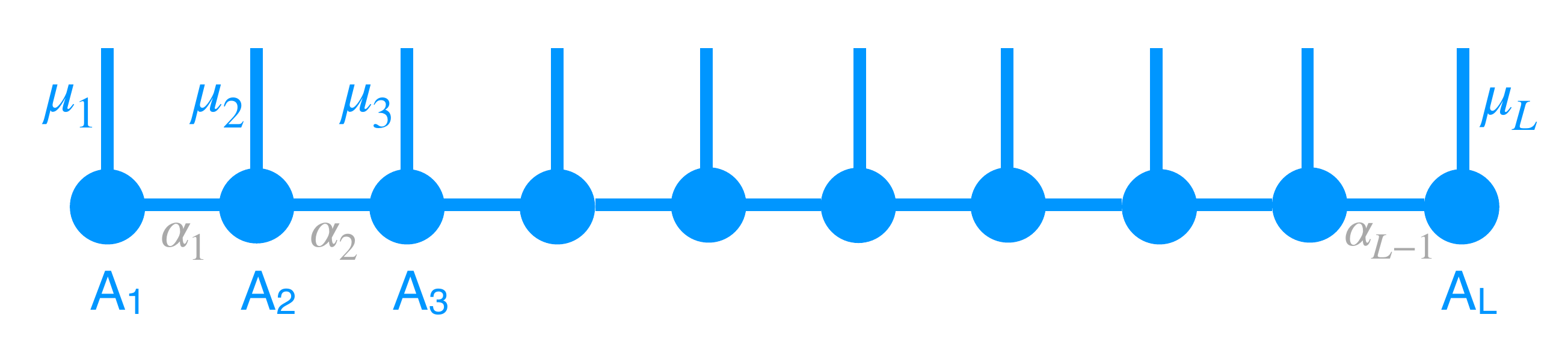}
\end{center}
\vspace*{-6mm}

\caption{Standard graphical representation of a tensor train. The basis functions $\varphi_k$ correspond to `sites' or `nodes'. Each node carries a matrix $A_k$ which is viewed as a function of the binary variable $\mu_k$ and the matrix indices ($\alpha_{k-1}$ and $\alpha_k$, except for the end nodes). These variables are represented by edges. Edges connecting two nodes correspond to the `virtual' variables $\alpha_k$ which appear in both adjacent nodes and are summed over.}
\label{F:TT}
\end{figure}

The tensor train decomposition of a tensor is not unique since the product of a matrix with its inverse can be inserted between each pair of matrices $A_{k}[\mu_k]$ and $A_{k+1}[\mu_{k+1}]$. Suitable ways to eliminate this nonuniqueness are discussed, e.g., in \cite{schollwoeck2011density} and \cite{holtz2012onmanifolds}. 

It is not difficult to observe that any tensor $(\psi_{\mu_1,...,\mu_L})\in \otimes_{i=1}^L \C^2$ can be brought into tensor-train format by successive singular value decompositions of matrix reshapes (see e.g. \cite{oseledets2009breaking,grasedyck2009hierarchical,schollwoeck2011density}).

In practice, successive SVD is impossible except for extremely small basis sets because the dimensions of the matrices $A_k$ blow up as $L$ gets large. It turns out that the dimensions $(r_k)_{1 \leq k \leq L-1}$ of these matrices for optimal tensor train representations of $(\Uppsi_{\mu_1 \dots \mu_L})$ are closely linked to the matrix reshape $(\Uppsi^{\mu_1\dots\mu_k}_{\mu_{k+1}\dots\mu_L}) \in \R^{2^k \times 2^{L-k}}$. 
Namely , for each $1 \leq k \leq L-1$ the dimension $r_k$ of the minimal tensor train representation~\eqref{eq:tensor-train-decomposition} (minimal in the sense of minimal ranks $(r_k)_{1 \leq k \leq L}$) is equal to the rank of the matrix $(\Uppsi_{\mu_{k+1}\dots\mu_L}^{\mu_1\dots\mu_k})$ \cite[Theorem~1]{holtz2012onmanifolds}. Simple examples showing that these ranks get huge even for Slater determinants are given in Section~\ref{sec:numerics}. 

It is then of central interest to understand how well, and how, a tensor $(\Uppsi_{\mu_1,...,\mu_L})$ can be approximated by a tensor train of prescribed dimension  
%
$(\tilde{r}_1,\dots,\tilde{r}_{L-1})$. An important first step that has been achieved in the mathematical literature are quasi-optimality estimates of the best approximation in terms of 
the singular values of the different reshapes of the tensor. More precisely, we have \cite{oseledets2009breaking,grasedyck2009hierarchical,hackbusch2012tensor,hackbush2014numerical} 
\[
\min\limits_{V \in \mathcal{M}_{\tilde{\mathbf{r}}}} \|\Uppsi - V\| \leq \sqrt{\sumlim{k=1}{L-1} \sumlim{j>\tilde{r}_k}{} {\sigma_j^{(k)}}^2} \leq \sqrt{L-1} \, \min\limits_{V \in \mathcal{M}_{\tilde{\mathbf{r}}}} \|\Uppsi - V\|,
\]
where for $1 \leq k \leq L-1$, $(\sigma_j^{(k)})_{1 \leq j \leq r_k}$ are the singular values of the reshape 
$(\Uppsi_{\mu_{k+1}\dots\mu_L}^{\mu_1\dots\mu_k})$ and $\mathcal{M}_{\tilde{\mathbf{r}}}$ is the space of tensor trains of dimension at most $(\tilde{r}_1,\dots,\tilde{r}_{L-1})$.

Studying the properties of the singular values of the reshaped matrix $(\Uppsi_{\mu_{k+1}\dots\mu_L}^{\mu_1\dots\mu_k})$ is thus essential to understand the accuracy of the tensor train approximation. 

Some works have recently appeared on the distribution of the singular values of tensors. In \cite{Hackbusch2017interconnection}, the authors have given partial answers on which sets of higher-order singular values are simultaneously possible for the different matrix reshpes. In \cite{griebel2019analysis}, the decay of the higher-order singular values of Sobolev functions is investigated for a continuous TT decomposition in which the variables $\mu_k$ in eq.~\eqref{eq:tensor-train-decomposition} belong to a continuous domain $\Omega \subset \R^n$. 
In \cite{shi2018numericalranks}, the authors study the distribution of the higher-order singular values of tensors built from polynomials. 

These results, while of interest in other contexts, do not apply to the TT representations of typical states in quantum chemistry as underlying QC-DMRG, where the tensors depend on binary variables, and arise via the nonlocal transformation described in section \ref{sec:Fock}
from states which are neither arbitrary (see, e.g., Proposition~\ref{prop:slater_ground_state_noninteracting_hamiltonian} and the subsequent remarks) nor polynomial nor of higher-order Sobolev regularity (we recall the well known cusp singularities caused by the Coulombic interactions). Our goal in the following is to study the distribution of the higher-order singular values of TT representations in quantum chemistry as used in QC-DMRG.

\paragraph{Notation} For $x \in \mathbb{R}^N$, $\|x\|$ denotes the Euclidean norm. We denote by $\binom{[k]}{j}$ for $1\leq j \leq k$ the set of multi-indices of cardinality $j$ (or $j$-combinations) such that
\[
\binom{[k]}{j} = \left\lbrace |\sigma|=j \ : \ 1  \leq \sigma(1) < \cdots < \sigma(j) \leq k \right\rbrace.
\]
The cardinality of the set $\binom{[k]}{j}$ is $\binom{k}{j}$. In an abuse of notation, we will assume that the elements of $\binom{[k]}{j}$ are in lexicographical order. For positive integers $j,k,\ell$, $\ell+\binom{[k]}{j}$ is the set of $j$ combinations of $\lbrace \ell+1,\dots,\ell+k \rbrace$. 

For a matrix $A$ and $\sigma \in \binom{[k]}{j}$, $A^\sigma$ is the submatrix of $A$ of rows with indices $ \lbrace \sigma(1),\dots,\sigma(j) \rbrace$. 
Similarly, $A_\sigma$ is the submatrix of $A$ of columns with indices $ \lbrace \sigma(1),\dots,\sigma(j) \rbrace$. The $k\times k$ identity matrix is denoted $\text{Id}_k$, and the $k\times k$ zero matrix by ${\bf 0}_{k\times k}$. 

\section{Singular values of the matricization of a Slater determinant}
\label{sec:maximal_virtual_bond}

In the following, we fix a single-particle basis set $\{\varphi_1,...,\varphi_L\}$ (i.e., an orthonormal set of functions in $H^1(\R^3\times\{\pm \frac{1}{2}\})$) and assume that $\Psi$ is a Slater determinant,
\begin{equation}
\label{eq:tensor}
\Psi = |\psi_1,...,\psi_N\rangle,
\end{equation}
where the orbitals $\psi_i$ belong to $\mathrm{Span}\{\varphi_j\}_{j=1}^L$ and $\{\psi_i\}_{i=1}^N$ is an orthonormal family.
By assumption, there exists a partial isometry $U \in \R^{N \times L}$ (\emph{i.e.} $UU^T = \mathrm{Id}_N$) such that $\psi_i=\sum_{j=1}^L U_{ij}\psi_j$, i.e. 
\begin{equation}
\label{eq:definition_U}    
\begin{pmatrix} \psi_1 \\ \vdots \\ \psi_N \end{pmatrix} = U \begin{pmatrix} \varphi_1 \\[2mm] \vdots \\[2mm] \varphi_L \end{pmatrix}.
\end{equation}
The rows of $U$ are thus the coefficients of the orbitals $\psi_i$ with respect to the basis $\{\varphi_1,...,\varphi_L\}$. The columns of $U$ are denoted by $u_j \in \R^N$, $j=1,\dots,L$, i.e.
\begin{equation}
U = \Big( u_1 \dots u_L \Big).
\end{equation}
Hence the submatrices $V_k$ and $W_k$ defined in Equation~\ref{eq:decomposition_V} below are $V_k = \Big( u_1 \cdots u_k \Big)$ and $W_k = \Big( u_{k+1} \cdots u_L \Big)$.

The occupation representation \eqref{occrep1}--\eqref{occrep2} of the orbital $\psi_i$, which we still denote $\psi_i$, is then
$$
  \psi_i = U_{i1} \Phi_{(10...0)} + U_{i2} \Phi_{(010...0)} + ... + U_{iL} \Phi_{(0...01)}.
$$
In this representation, the orbitals $\psi_i$ can thus be interpreted as functions on the set of nodes of the tensor network.

We claim that the occupation representation of the Slater determinant $\Psi$ is
$$
  \Uppsi = \sum_{\mu_1,...,\mu_L=0}^1 \Uppsi_{\mu_1 ... \mu_L} \, \Phi_{(\mu_1,...,\mu_L)}
$$
with the coefficients
\begin{equation}\label{slater_coefficient}
   \Uppsi_{\mu_1...\mu_L} = \begin{cases} 0 & \mbox{if }\sum_{i=1}^L\mu_i \neq N \\
                      \det(u_{i_1} \cdots u_{i_N}) & \mbox{otherwise,} \end{cases}
\end{equation}
where, for any given $(\mu_1,...,\mu_L)$ with $\sum_i\mu_i=N$, $i_1<...<i_N$ are the indices such that $\mu_{i_k}=1$. To see this, we expand the Slater determinant \eqref{eq:tensor} as follows, denoting the group of permutations $\sigma\, : \, \{1,...,N\}\to\{1,...,N\}$ by $S_N$ and the signature of $\sigma$ by $\epsilon(\sigma)\in\{\pm 1\}$: 
\begin{eqnarray*}
  \Psi &=& \Big|\sum_{j_1=1}^L U_{1j_1}\varphi_{j_1}, \, ... \, , \, \sum_{j_N=1}^L U_{N j_N}\varphi_{j_N}\Big\rangle \\
       &=& \sum_{j_1,...,j_N=1}^L U_{1j_1}\cdot ... \cdot U_{Nj_N} |\varphi_{j_1},...,\varphi_{j_N}\rangle \\     &=& \sum_{1\le j_1<...<j_N\le L}^{\textcolor{white}{L}} \underbrace{\sum_{\sigma\in S_N} \epsilon(\sigma) U_{1j_{\sigma(1)}}\cdot ... \cdot U_{Nj_{\sigma(N)}}}_{=\det(u_{j_1}\cdots u_{j_N})} |\varphi_{j_1}, ... , \varphi_{j_N}\rangle. 
\end{eqnarray*}
The formula for the coefficients is now immediate from \eqref{occrep1}--\eqref{occrep2}. 

\begin{assumption}
\label{assumption:invertibility}
Let $1 \leq k \leq L$, let $V_k \in \R^{N \times k}$ and $W_k \in \R^{N \times (L-k)}$ be the matrices such that
\begin{equation}
\label{eq:decomposition_V}
U = \Big( V_k \  W_k \Big).
\end{equation}
We assume that $V_k$ and $W_k$ are full-rank matrices.
\end{assumption}

\begin{theo}
\label{thm:sing_values_TT_of_slater}
Let $1 \leq k \leq L-1$ and let $\sigma_1 \geq \dots \geq \sigma_d$ be the nonzero singular values of $\Uppsi^{\mu_1,\dots,\mu_{k}}_{\mu_{k+1},\dots,\mu_L}$. Then under Assumption~\ref{assumption:invertibility} there are exactly $d=\min(2^k,2^N,2^{L-k})$ nonzero singular values. Moreover these singular values satisfy
\begin{equation}
\label{eq:singular_values_reshape_property}
    \forall \, 1 \leq j \leq d, \ \sigma_j^2 \sigma_{d-j}^2 = p(k,L,N)
\end{equation}
for some $j$-independent constant $p(k,L,N)$. Explicitly,
\begin{equation} \label{prefac}
   p(k,L,N) = 
    \begin{cases} 
    \det(V_k^T V_k) \det(W_kW_k^T), & \text{ if } k \leq N \\
    \det(V_k V_k^T) \det(W_kW_k^T), & \text{ if } N \leq k \leq L-k \\
    \det(V_k V_k^T) \det(W_k^TW_k), & \text{ if } L-k \leq k \leq L-1. 
    \end{cases}
\end{equation}
\end{theo}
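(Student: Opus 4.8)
The plan is to reduce $\Psi$, by orbital rotations that do not change the singular values of the matricization, to a canonical Slater determinant which is a tensor product of two-orbital states across the cut at $k$; the Schmidt spectrum, and with it the list of singular values, can then be read off directly, and the inversion symmetry becomes the elementary observation that a collection of products of two‑element sets is closed under complementation.

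I would first record three invariances. The singular values of $\Uppsi^{\mu_1\dots\mu_k}_{\mu_{k+1}\dots\mu_L}$ are unchanged if one (i) replaces the orbitals $\psi_i$ by $\sum_j O_{ij}\psi_j$ with $O\in O(N)$, which only multiplies $\Psi$, hence $\Uppsi$, by $\det O=\pm1$ and sends $U\mapsto OU$; (ii) replaces $(\varphi_1,\dots,\varphi_k)$ by $(\varphi_1,\dots,\varphi_k)P^{T}$ with $P\in O(k)$, which sends $V_k\mapsto V_kP^{T}$ and acts on the left tensor factors $\bigotimes_{i=1}^k\C^2$ by the orthogonal, particle-number-preserving transformation that $P$ induces on the left Fock space; or (iii) performs the analogous replacement of $(\varphi_{k+1},\dots,\varphi_L)$ with some $Q\in O(L-k)$, acting orthogonally on the right tensor factors. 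Since $UU^{T}=\text{Id}_N$ reads $V_kV_k^{T}+W_kW_k^{T}=\text{Id}_N$, the symmetric positive semidefinite matrices $V_kV_k^{T}$ and $W_kW_k^{T}$ commute, so some $O\in O(N)$ diagonalizes both: $OV_kV_k^{T}O^{T}=\Lambda:=\mathrm{diag}(\lambda_1,\dots,\lambda_N)$ and $OW_kW_k^{T}O^{T}=\text{Id}_N-\Lambda$, with all $\lambda_i\in[0,1]$; by Assumption~\ref{assumption:invertibility} exactly $\min(N,k)$ of the $\lambda_i$ are nonzero and exactly $\min(N,L-k)$ are strictly less than $1$. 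After applying this $O$, the rows of $OV_k$ are pairwise orthogonal with squared norms $\lambda_i$, and likewise the rows of $OW_k$ with squared norms $1-\lambda_i$; choosing $P$ and $Q$ so as to align these rows with coordinate axes, one may assume that each column of $U$ is a (possibly zero) multiple of a standard basis vector $e_i\in\R^N$, the columns $u_1,\dots,u_k$ in the left block realizing the vectors $\sqrt{\lambda_i}\,e_i$ over the indices $i$ with $\lambda_i>0$, and the columns in the right block realizing $\sqrt{1-\lambda_i}\,e_i$ over the indices $i$ with $\lambda_i<1$.

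In this canonical form one checks that $\Uppsi_{\mu_1\dots\mu_L}=\det(u_{i_1}\cdots u_{i_N})$ vanishes unless the occupied columns are $e_1,\dots,e_N$ up to scalars; since the direction $e_i$ is carried by a left column precisely when $\lambda_i>0$ and by a right column precisely when $\lambda_i<1$, the surviving occupation patterns are exactly those in which, for each $i$ with $\lambda_i\in(0,1)$, exactly one of the two sites carrying $e_i$ is occupied, the sites carrying an $e_i$ with $\lambda_i\in\{0,1\}$ are occupied, and all other sites are empty. Hence, up to a global sign, $\Uppsi=\bigotimes_{i=1}^N\big(\sqrt{\lambda_i}\,\chi_1\otimes\chi_0+\sqrt{1-\lambda_i}\,\chi_0\otimes\chi_1\big)$ on the respective pairs of sites, tensored with fixed $\chi_0$'s and $\chi_1$'s on the remaining sites — the familiar fact that a Slater determinant becomes a product of geminals in a basis adapted to the bipartition. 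Each pair with $\lambda_i\in(0,1)$ straddles the cut at $k$ and is already in Schmidt form, so the Schmidt coefficients of $\Uppsi$ across the cut are precisely the products
\[
s_I=\prod_{i\in I}\sqrt{\lambda_i}\;\prod_{i\notin I}\sqrt{1-\lambda_i},\qquad I\subseteq\{\,i:\lambda_i\in(0,1)\,\},
\]
all of which are positive. Since $\#\{i:\lambda_i\in(0,1)\}=\min(N,k)+\min(N,L-k)-N$, which equals $\min(k,N,L-k)$ in the regimes covered by \eqref{prefac}, this gives exactly $d=2^{\min(k,N,L-k)}=\min(2^k,2^N,2^{L-k})$ nonzero singular values.

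Finally, complementation $I\mapsto I^{c}$ inside $\{i:\lambda_i\in(0,1)\}$ is an involution of the index set of the nonzero $s_I$, and
\[
s_I\,s_{I^{c}}=\prod_{i:\,\lambda_i\in(0,1)}\sqrt{\lambda_i(1-\lambda_i)}
\]
does not depend on $I$; thus the multiset of nonzero singular values is closed under $s\mapsto c/s$ for this constant $c$, which, once the singular values are sorted decreasingly, is exactly the asserted inversion symmetry \eqref{eq:singular_values_reshape_property}, with $p(k,L,N)=\prod_{i:\lambda_i\in(0,1)}\lambda_i(1-\lambda_i)$. The factors with $\lambda_i\in\{0,1\}$ equal $1$, so $p(k,L,N)=\big(\prod_{i:\lambda_i>0}\lambda_i\big)\big(\prod_{i:\lambda_i<1}(1-\lambda_i)\big)$; the first product is the product of the nonzero eigenvalues of $V_kV_k^{T}$, which is $\det(V_k^{T}V_k)$ when $k\le N$ and $\det(V_kV_k^{T})$ when $k\ge N$, and the second is the analogous expression built from $W_k$, yielding precisely the three cases of \eqref{prefac}. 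The only genuinely delicate point is the \emph{simultaneous} diagonalization and alignment of $V_k$ and $W_k$ — which is exactly what the identity $V_kV_k^{T}+W_kW_k^{T}=\text{Id}_N$ makes possible — together with the bookkeeping of which $\lambda_i$ reach the endpoints $0$ or $1$; this bookkeeping, where Assumption~\ref{assumption:invertibility} enters, produces both the value of $d$ and the case split in \eqref{prefac}, and identifies the non-generic cut positions, which can alternatively be reduced to the generic one via the symmetries $k\leftrightarrow L-k$ and $N\leftrightarrow L-N$. (One can also obtain the block structure without this reduction, by expanding each particle-number block of the matricization via the Cauchy--Binet/Laplace formula into a product of compound matrices of $V_k$ and $W_k$ with a signed permutation, but the reduction above is the cleaner route.)
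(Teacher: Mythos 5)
Your proof is correct, but it takes a genuinely different route from the paper's, and the comparison is instructive. The paper works with the block matrices $C_j$ of the particle-number--resolved matricization, computes the Gram matrices $C_jC_j^T$ explicitly via a refined Cauchy--Binet identity (Proposition~\ref{prop:modified_cauchy_binet}), identifies them with compound matrices $\Lambda^j(V_k^T(W_kW_k^T)^{-1}V_k)$, and derives the inversion symmetry from the multiplicative structure of compound-matrix eigenvalues (pairing $\Lambda^j$ with $\Lambda^{k-j}$ or $\Lambda^{N-j}$). You instead exploit the three exact invariances of the singular spectrum, use $V_kV_k^T+W_kW_k^T=\mathrm{Id}_N$ to simultaneously orthogonally diagonalize the two Gram matrices, and reduce $U$ to a canonical form in which the Slater determinant visibly factors into geminals across the cut. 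This is more conceptual: it exhibits the Schmidt vectors, makes the inversion symmetry a trivial closure-under-complementation statement on a product set, and derives $d$ and $p(k,L,N)$ in one stroke from the count of $\lambda_i\in(0,1)$ and the identity $\prod_{\lambda_i\in(0,1)}\lambda_i(1-\lambda_i)=\bigl(\prod_{\lambda_i>0}\lambda_i\bigr)\bigl(\prod_{\lambda_i<1}(1-\lambda_i)\bigr)$, with the three cases of \eqref{prefac} arising automatically from which of $V_k$, $W_k$ is tall or wide. The paper's Cauchy--Binet machinery, on the other hand, is reused for the reduced density matrices (Proposition~\ref{prop:orbital_density_matrix}), which is a reason the authors might have preferred it.

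Two small points of rigor. First, the claim that ``up to a global sign'' $\Uppsi$ equals a tensor product of geminals is slightly too strong: the fermionic antisymmetrization induces pattern-dependent signs in $\det(u_{i_1}\cdots u_{i_N})$ (already visible for $N=2$, $k=2$, $L=4$). Your conclusion survives regardless, because in the canonical form the nonzero entries of the matricization lie on a bijection $I\mapsto(\mu_{\mathrm{left}}(I),\mu_{\mathrm{right}}(I^c))$ between left and right patterns, so the matricization is a generalized permutation matrix on its support and its singular values are $|\pm s_I|=s_I$ irrespective of the signs; it is worth replacing the ``global sign'' remark by this observation. Second, your derivation makes explicit that the case boundaries in \eqref{prefac} should read $N\le k\le L-N$ and $L-N\le k$ rather than $N\le k\le L-k$ and $L-k\le k$ (equivalently $k\le L/2$ and $k\ge L/2$): for $L/2<k<L-N$ the stated third formula would give $\det(W_k^TW_k)=0$, whereas the product of nonzero eigenvalues of $W_kW_k^T$ is $\det(W_kW_k^T)>0$. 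The paper's own proof (Cases 1--3) also uses the boundaries $N$ and $L-N$, so this is a typo in the statement which your argument exposes cleanly; in both treatments the regime $L-N<k<N$, which can only occur when $L<2N$, requires a fourth formula $\det(V_k^TV_k)\det(W_k^TW_k)$ and is implicitly excluded.
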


We remark that the inversion symmetry is directly visible in the logarithmic plots of singular value distributions in Figure \ref{fig:slater-comparison}, as symmetry of all graphs with respect to inversion at the mid-point.

Theorem~\ref{thm:sing_values_TT_of_slater} in particular gives the following universal upper bound on the bond dimension of the tensor train representation of any $N$-electron Slater determinant.

\begin{cor}
\label{cor:maximal_bond_dimension}
The bond dimension of a minimal rank tensor train representation of an $N$-particle Slater determinant $\Psi$ is at most $2^N$.
\end{cor}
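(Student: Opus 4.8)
The plan is to combine two facts already in hand: first, that the bond dimension of the minimal tensor train of $\Uppsi$ equals $\max_{1\le k\le L-1} r_k$, where $r_k$ is the rank of the reshape $\Uppsi^{\mu_1\dots\mu_k}_{\mu_{k+1}\dots\mu_L}$ (this was recalled above, citing \cite{holtz2012onmanifolds}); and second, the exact count of nonzero singular values in Theorem~\ref{thm:sing_values_TT_of_slater}. When $U$ is such that $V_k$ and $W_k$ are full rank for every $k$, the theorem gives $r_k=\min(2^k,2^N,2^{L-k})\le 2^N$ for each $k$, and taking the maximum over $k$ proves the corollary at once.

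The substance of the proof is therefore to dispense with Assumption~\ref{assumption:invertibility}, since the statement concerns an arbitrary Slater determinant; this is the step I would expect to need the most care. I would argue by approximation. It is routine to check that the partial isometries $U$ (those with $UU^T=\mathrm{Id}_N$) for which every $V_k$ and every $W_k$ has maximal possible rank form a dense subset of the Stiefel manifold $\{U\in\R^{N\times L}:UU^T=\mathrm{Id}_N\}$. Given an arbitrary $U$, pick such $U_\varepsilon\to U$; since $\Uppsi$ depends polynomially, hence continuously, on $U$ through \eqref{slater_coefficient}, every reshape of the tensor $\Uppsi_\varepsilon$ associated with $U_\varepsilon$ converges entrywise to the corresponding reshape of $\Uppsi$. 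The set of matrices of rank at most $2^N$ is closed (it is the common zero set of all $(2^N+1)\times(2^N+1)$ minors) and, by the first paragraph, contains every reshape of $\Uppsi_\varepsilon$; hence it contains every reshape of $\Uppsi$. Thus $r_k\le 2^N$ for all $k$, and the bond dimension is at most $2^N$.

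One can also bypass both the theorem and the genericity discussion by factoring the reshape directly through the exterior algebra $\bigwedge^{\bullet}\R^N=\bigoplus_{m=0}^N\bigwedge^{m}\R^N$, which has dimension $2^N$. Index the rows of the $k$-th reshape by subsets $I\subseteq\{1,\dots,k\}$ and its columns by subsets $J\subseteq\{k+1,\dots,L\}$; by \eqref{slater_coefficient} the $(I,J)$ entry equals $\det\big((u_i)_{i\in I}\,(u_j)_{j\in J}\big)$ when $|I|+|J|=N$ (columns taken in increasing index order) and $0$ otherwise. Define linear maps $P:\R^{2^k}\to\bigwedge^{\bullet}\R^N$ and $Q:\R^{2^{L-k}}\to\bigwedge^{\bullet}\R^N$ by $e_I\mapsto\bigwedge_{i\in I}u_i$ and $e_J\mapsto\bigwedge_{j\in J}u_j$ (factors in increasing index order, so no signs intervene), and let $B$ be the bilinear form on $\bigwedge^{\bullet}\R^N$ sending a pair of elements to the coefficient of the top form in their wedge product. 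Then $B(Pe_I,Qe_J)$ equals the $(I,J)$ entry in every case: for $|I|+|J|=N$ the wedge of these $N$ vectors is that determinant times the top form, and for $|I|+|J|\neq N$ it either vanishes or has no top-degree component, matching the zero entries. Hence the $k$-th reshape is $P^{T}BQ$, a product factoring through the $2^N$-dimensional space $\bigwedge^{\bullet}\R^N$, so its rank is at most $2^N$ uniformly in $k$ and with no hypothesis on $U$, which again yields the corollary.
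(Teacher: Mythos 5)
Your proof is correct, and it is more thorough than what the paper actually gives. The paper dispatches the corollary with a one‑sentence remark: Theorem~\ref{thm:sing_values_TT_of_slater} gives $r_k=\min(2^k,2^N,2^{L-k})\le 2^N$ under Assumption~\ref{assumption:invertibility}, and the assumption can then be dropped ``since the singular values depend continuously on the coefficients of the matrix.'' Your first two paragraphs spell out precisely that continuity argument, making explicit what the paper leaves implicit: density of the full‑rank $U$ in the Stiefel manifold, polynomial (hence continuous) dependence of $\Uppsi$ on $U$ through~\eqref{slater_coefficient}, and closedness of the condition ``rank at most $2^N$'' as the vanishing locus of all $(2^N{+}1)\times(2^N{+}1)$ minors. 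This is the same route as the paper, properly written out.

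Your third paragraph, by contrast, is a genuinely different and self‑contained argument, and arguably the cleaner one: indexing the rows of the $k$-th reshape by subsets $I\subseteq\{1,\dots,k\}$ and the columns by $J\subseteq\{k+1,\dots,L\}$, the entry is the top-degree coefficient of $\bigl(\bigwedge_{i\in I}u_i\bigr)\wedge\bigl(\bigwedge_{j\in J}u_j\bigr)$, so the reshape factors as $P^{T}BQ$ through the $2^N$-dimensional exterior algebra $\bigwedge^{\bullet}\R^N$. The sign bookkeeping is handled correctly since every index of $I$ precedes every index of $J$, so the stated order agrees with increasing order on $I\cup J$, exactly as~\eqref{slater_coefficient} requires. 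This gives the rank bound directly, uniformly in $k$, without Assumption~\ref{assumption:invertibility}, without Theorem~\ref{thm:sing_values_TT_of_slater}, and without any limiting argument. It also transparently explains the sharpness in Corollary~\ref{cor:example}: the pairing $B$ between $\bigwedge^{j}\R^N$ and $\bigwedge^{N-j}\R^N$ is nondegenerate, so the rank is exactly $2^N$ precisely when $P$ and $Q$ are both surjective onto $\bigwedge^{\bullet}\R^N$. What the paper's continuity route buys is economy given that Theorem~\ref{thm:sing_values_TT_of_slater} is already proved; what your exterior-algebra route buys is independence from that machinery and a structural explanation of where the number $2^N$ comes from.
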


It is interesting to notice that the bond dimension is independent of the number of one-particle basis functions considered. Moreover this bound is optimal:

\begin{cor} \label{cor:example}
For $L=2N$ and $\psi_k = \frac{1}{\sqrt{2}}(\varphi_k+\varphi_{k+N})$ for $k=1,...,N$, the bond dimension of the minimal tensor train representation of the Slater determinant $|\psi_1,...,\psi_N\rangle$ is exactly $2^N$.
\end{cor}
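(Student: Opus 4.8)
\noindent\emph{Proof proposal.}
The plan is to sandwich the bond dimension between the universal upper bound $2^N$ furnished by Corollary~\ref{cor:maximal_bond_dimension} and a matching lower bound obtained from Theorem~\ref{thm:sing_values_TT_of_slater} applied at the central cut $k=N$.

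First I would write down the coefficient matrix $U\in\R^{N\times 2N}$ of this particular Slater determinant. Since $\psi_k=\frac{1}{\sqrt2}(\varphi_k+\varphi_{k+N})$, the $k$-th row of $U$ has nonzero entries only in columns $k$ and $k+N$, each equal to $1/\sqrt2$; equivalently $U=\frac{1}{\sqrt2}\bigl(\text{Id}_N\ \text{Id}_N\bigr)$. The rows of $U$ are orthonormal because the supports $\{k,k+N\}$ are pairwise disjoint, so $\{\psi_1,\dots,\psi_N\}$ is indeed orthonormal and $\Psi=|\psi_1,\dots,\psi_N\rangle$ is a Slater determinant of the type considered in \eqref{eq:tensor}.

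Next, with the block splitting \eqref{eq:decomposition_V} at $k=N$ one gets $V_N=W_N=\frac{1}{\sqrt2}\text{Id}_N$, both of which are full rank, so Assumption~\ref{assumption:invertibility} holds for $k=N$. Theorem~\ref{thm:sing_values_TT_of_slater} then says that the matricization $\Uppsi^{\mu_1,\dots,\mu_N}_{\mu_{N+1},\dots,\mu_{2N}}$ has exactly $d=\min(2^N,2^N,2^{L-N})=2^N$ nonzero singular values, where all three exponents coincide precisely because $L=2N$. Hence this matricization has rank $2^N$, and by the identification of minimal tensor-train ranks with reshape ranks recalled in Section~\ref{sec:TT} (\cite[Theorem~1]{holtz2012onmanifolds}) the corresponding minimal bond $r_N$ equals $2^N$. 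Therefore the bond dimension $r=\max_k r_k$ is at least $2^N$, while Corollary~\ref{cor:maximal_bond_dimension} gives $r\le 2^N$; thus $r=2^N$.

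There is no real obstacle here: the only point worth isolating is that, for $L=2N$, the three numbers $2^k$, $2^N$, $2^{L-k}$ bounding the number of nonzero singular values in Theorem~\ref{thm:sing_values_TT_of_slater} are all equal exactly at $k=N$, which is what allows the universal cap $2^N$ to be saturated. If one preferred an argument not invoking Theorem~\ref{thm:sing_values_TT_of_slater}, one could compute the reshape directly from \eqref{slater_coefficient}: for $U=\frac{1}{\sqrt2}(\text{Id}_N\ \text{Id}_N)$ the minor $\det(u_{i_1}\cdots u_{i_N})$ is nonzero iff $\{i_1,\dots,i_N\}$ contains exactly one of $\{j,j+N\}$ for each $j=1,\dots,N$, in which case it equals $\pm 2^{-N/2}$; consequently $\Uppsi^{\mu_1,\dots,\mu_N}_{\mu_{N+1},\dots,\mu_{2N}}$ has exactly one nonzero entry (of modulus $2^{-N/2}$) in each row and in each column, i.e. it is $2^{-N/2}$ times a signed permutation matrix of size $2^N$, so its rank is $2^N$.
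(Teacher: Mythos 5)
Your proposal is correct, and it follows the route the paper clearly intends (the corollary is stated without a written-out proof in the paper). You verify that $U=\tfrac{1}{\sqrt2}(\mathrm{Id}_N\ \mathrm{Id}_N)$ satisfies Assumption~\ref{assumption:invertibility} at the central cut $k=N$ so that Theorem~\ref{thm:sing_values_TT_of_slater} gives $r_N=\min(2^N,2^N,2^N)=2^N$, and combine this with the upper bound from Corollary~\ref{cor:maximal_bond_dimension}; your alternative direct computation, showing the central reshape is $2^{-N/2}$ times a signed permutation matrix of size $2^N$, is an equally valid and nicely self-contained way to establish the lower bound.
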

 
Corollary~\ref{cor:maximal_bond_dimension} does not require Assumption~\ref{assumption:invertibility}, since the singular values depends continuously on the coefficients of the matrix. This result is known in the physics community; it is implicit in \cite{schneider2014tensor} and explicit in \cite{silvi2013matrix}). 


Our results on the singular values of the reshaped tensor $(\Uppsi^{\mu_1,\dots,\mu_{k}}_{\mu_{k+1},\dots,\mu_L})$ show that the bond dimension of a noninteracting state is \emph{not} small. 
Moreover their decay is related to the order in which the one-particle basis functions are labelled. In fact, reordering the one-particle basis functions spectacularly impacts the behavior of the tail of the singular value distributions, as will be shown in Figure~\ref{fig:slater-comparison}. 
Moreover since the prefactor $p(k,L,N)$ governing the singular values is explicitly known, it provides a natural way to choose the ordering of the one-particle basis functions in order to optimize the tail distribution of the singular values. 
This idea is explained thoroughly in Section~\ref{subsec:best_prefactor_order}.

Theorem~\ref{thm:sing_values_TT_of_slater} enables one, via Weyl's inequality, to bound the singular values of a tensor which is a sum of Slater determinants, which can be seen as a basic model of a correlated state.

\begin{theo}
\label{cor:strongly_correlated_hosv}
Let $(\alpha_I)$ be complex coefficients such that $\sumlim{I}{} |\alpha_I|^2 =1$ and let
\(
\Psi = \sumlim{\substack{I \subset \lbrace 1,\dots,L \rbrace \\ |I|=N}}{} \alpha_I \Psi_{[I]},
\)
where $\Psi_{[I]} = | \psi_{i_1},...,\psi_{i_N} \rangle$, $I=(i_1,...,i_N)$.

Let $\Uppsi_{\mu_1,\dots,\mu_L}$ (resp. $\Uppsi^{(I)}_{\mu_1,\dots,\mu_L}$) be the tensor representation of $\Psi$ (resp. $\Psi_{[I]}$) in the basis of Slater determinants $\Phi_{[I]}$. Let $(\sigma_j)$ be the singular values of $(\Uppsi^{\mu_1,\dots,\mu_k}_{\mu_{k+1},\dots,\mu_L})$, and $(\sigma_j^{(I)})$ be the singular values of $\left(\Uppsi^{(I)}\right)^{\mu_1,\dots,\mu_k}_{\mu_{k+1},\dots,\mu_L}$.

Then for all $1 \leq j \leq 2^{\min(k,L-k)}$ and any $j_I \in \N$ such that $\sumlim{\substack{I \subset \lbrace 1,\dots,L \rbrace \\ |I|=N}}{} j_I = j$, we have
\begin{equation}
    \sigma_j \leq \sumlim{\substack{I \subset \lbrace 1,\dots,L \rbrace \\ |I|=N}}{} |\alpha_I| \sigma^{(I)}_{j_I}.
\end{equation}
\end{theo}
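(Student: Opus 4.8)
The plan is to reduce the statement to Weyl's inequality for singular values of a sum of matrices, exploiting the fact that both the passage from a wavefunction to its occupation representation and the reshape into a matrix are \emph{linear} operations. The decay of each individual singular value $\sigma^{(I)}_{j_I}$ is of course governed by Theorem~\ref{thm:sing_values_TT_of_slater}, but that theorem is not needed here; the only structural input is linearity.

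First I would record that $\Psi\mapsto\Uppsi$ (Section~\ref{sec:Fock}) is linear in $\Psi$ and that the reshape $\Uppsi\mapsto(\Uppsi^{\mu_1,\dots,\mu_k}_{\mu_{k+1},\dots,\mu_L})$ is linear as well. Setting $A:=(\Uppsi^{\mu_1,\dots,\mu_k}_{\mu_{k+1},\dots,\mu_L})$ and $A^{(I)}:=\big((\Uppsi^{(I)})^{\mu_1,\dots,\mu_k}_{\mu_{k+1},\dots,\mu_L}\big)$, viewed as elements of $\C^{2^k\times 2^{L-k}}$, the hypothesis $\Psi=\sum_I\alpha_I\Psi_{[I]}$ gives $A=\sum_I\alpha_I A^{(I)}$. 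The $\sigma_j$ (resp. $\sigma^{(I)}_{j}$) are precisely the singular values of $A$ (resp. $A^{(I)}$); since $A$ has at most $\min(2^k,2^{L-k})=2^{\min(k,L-k)}$ nonzero singular values, only the range $1\le j\le 2^{\min(k,L-k)}$ is of interest (for larger $j$ the left-hand side vanishes, and $N$ drops out of the bound because the \emph{sum} $A$ need not inherit the reduced rank of the individual $A^{(I)}$).

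Next I would invoke Weyl's inequality in its Eckart--Young--Mirsky form: $\min\{\|B-R\|_{\mathrm{op}}:\operatorname{rank}R\le r\}=\sigma_{r+1}(B)$. For each $I$ with $j_I\ge1$, let $R_I$ realize this minimum for $B=A^{(I)}$ and $r=j_I-1$, so that $\operatorname{rank}R_I\le j_I-1$ and $\|A^{(I)}-R_I\|_{\mathrm{op}}=\sigma^{(I)}_{j_I}$, and put $R:=\sum_{I:\,j_I\ge1}\alpha_I R_I$. Subadditivity of rank together with $\sum_I j_I=j$ gives
\[
  \operatorname{rank}R\ \le\ \sum_{I:\,j_I\ge1}(j_I-1)\ =\ j-\#\{I:j_I\ge1\}\ \le\ j-1 ,
\]
so $R$ is an admissible competitor for $\sigma_j(A)$, and by the triangle inequality
\[
  \sigma_j\ \le\ \|A-R\|_{\mathrm{op}}\ \le\ \sum_{I:\,j_I\ge1}|\alpha_I|\,\|A^{(I)}-R_I\|_{\mathrm{op}}\ =\ \sum_{I:\,j_I\ge1}|\alpha_I|\,\sigma^{(I)}_{j_I}\ \le\ \sum_I|\alpha_I|\,\sigma^{(I)}_{j_I},
\]
which is the desired estimate.

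The argument is short and I do not expect a genuine obstacle; the one place deserving attention is the rank bookkeeping in the first display. One must sum only over the at most $j$ indices with $j_I\ge1$ so that no negative rank appears, check that $\sum_{I:j_I\ge1}(j_I-1)\le j-1$ follows from $\sum_I j_I=j$, and observe that discarding the $j_I=0$ terms only weakens the final bound since every $\sigma^{(I)}_{j_I}\ge0$. An equivalent formulation would iterate the two-term Weyl bound $\sigma_{p+q-1}(B+C)\le\sigma_p(B)+\sigma_q(C)$ over the nonvanishing terms and then use monotonicity of the singular values to replace the resulting index by $j$; I find the low-rank-approximant version above cleaner and would present that.
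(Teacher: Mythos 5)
Your proof is correct and takes a genuinely different route from the paper's. The paper first proves a ``generalized Weyl inequality'' for eigenvalues of a sum $B=\sum_k A_k$ of $L$ symmetric matrices, via the classical subspace-intersection dimension count (Horn--Johnson, Lemma 4.2.3), and then transfers it to singular values by passing to the Hermitian dilation $\bigl(\begin{smallmatrix}0 & A \\ A^T & 0\end{smallmatrix}\bigr)$. You instead obtain the singular-value estimate directly from the Eckart--Young--Mirsky characterization $\sigma_{r+1}(B)=\min\{\|B-R\|_{\mathrm{op}}:\operatorname{rank}R\le r\}$, together with subadditivity of rank and the triangle inequality. Both are standard toolboxes for Weyl-type bounds; yours has the advantage of never leaving the world of singular values and low-rank approximants, avoiding the dilation and the eigenvalue-specific lemma, and it gives a cleaner account of why $N$ and Theorem~\ref{thm:sing_values_TT_of_slater} play no role here -- only linearity of $\Psi\mapsto\Uppsi$ and of the matricization matters.

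One small point to make explicit in the $j_I=0$ bookkeeping. After setting $R=\sum_{I:j_I\ge1}\alpha_I R_I$ you write $\|A-R\|_{\mathrm{op}}\le\sum_{I:j_I\ge1}|\alpha_I|\|A^{(I)}-R_I\|_{\mathrm{op}}$, but in fact
\[
A-R=\sum_{I:j_I\ge1}\alpha_I\bigl(A^{(I)}-R_I\bigr)+\sum_{I:j_I=0}\alpha_I A^{(I)},
\]
so the displayed inequality silently discards the $j_I=0$ contributions. This is harmless precisely because $\sigma^{(I)}_0$ is undefined (or, by the usual convention, $+\infty$), so the theorem's right-hand side is only meaningful when $j_I\ge1$ for every $I$ with $\alpha_I\ne0$; under that reading all the discarded terms have $\alpha_I=0$ and your chain is exact, and the final ``$\le\sum_I|\alpha_I|\sigma^{(I)}_{j_I}$'' is actually an equality. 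State this assumption once and the argument is airtight. The rank count $\operatorname{rank}R\le j-\#\{I:j_I\ge1\}\le j-1$ is fine since $j\ge1$ forces at least one $j_I\ge1$.
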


Since the singular values of a correlated state are upper-bounded by linear combinations of singular values of a single Slater determinant, one can optimize the ordering of the basis functions to lower the upper bound. 
The corresponding scheme will be outlined in Section~\ref{subsec:best_weighted_prefactor}.

\section{Ordering the one-particle basis functions} \label{sec:ordering}

We now come to a central issue in QC-DMRG, and in tensor approximations generally: {\it How to choose the topology of the network?}
When the network is a tensor train associated with a fixed one-particle basis $\{\varphi_1,...,\varphi_L\}$, as described in Figure \ref{F:TT}, this question reduces to: {\it How to choose the ordering of the basis?}  

\subsection{Canonical order}
As explained in Section~\ref{sec:FCI}, in practice the single-particle basis typically consists of the low-lying eigenfunctions of the Fock operator. The simplest method, and the one used in early QC-DMRG calculations, is to order the orbitals simply according to their Hartree-Fock eigenvalues. This ordering is known as {\it canonical order}. 

\subsection{Fiedler order}
A significant improvement was achieved in the pioneering work \cite{barcza2011quantum} which introduced the Fiedler ordering; it brings into play concepts from quantum information theory and spectral graph theory. A precursor of this method can be found in \cite{rissler2006measuring} 
where orbitals were re-ordered numerically so as to promote smaller bandwidth of the mutual information matrix. 

The mutual information matrix $(I\! M_{ij})_{1 \leq i,j \leq L}$ is defined by 
\begin{equation}
\label{eq:mutual_information}
I\! M_{ij} = (1-\delta_{ij})(S_i^{(1)}+S_j^{(1)}-S_{ij}^{(2)}),
\end{equation}
where $S_i^{(1)}$ and $S_{ij}^{(2)}$ are respectively the von Neumann entropies of the one-orbital density matrices $\rho_i^{(1)}$ and the two-orbital density matrices $\rho_{ij}^{(2)}$(see eqs.~\eqref{RDM1},~\eqref{RDM2} below); recall that the von Neumann entropy of a density matrix is $S(\rho)=-\mbox{tr}\, \rho \, \log \rho$. The matrix element $I\! M_{ij}$ can also be interpreted as the minus the relative entropy (alias Kullback-Leibler divergence) between $\rho^{(2)}_{ij}$ and $\rho^{(1)}_i \otimes \rho^{(1)}_j$:
$$
   I\! M_{ij} = KL(\rho^{(2)}_{ij}||\rho^{(1)}_i\otimes\rho_j^{(1)}), \;\;\; \mbox{ with }KL(\rho || \rho') = \mbox{tr} \, \rho(\log\rho - \log\rho').  
$$ 
The \emph{rationale} behind the mutual information is that it captures the correlation between two sites for the considered state; note that the above KL divergence vanishes if and only if $\rho^{(2)}_{ij}$ factors into $\rho^{(1)}_i\otimes\rho^{(1)}_j$.    
Hence in a tensor train -- or in a more general tensor network -- one wants to keep sites with high mutual information quite close to each other. 

The one-orbital and two-orbital RDM (reduced density matrices) of a state $\Uppsi\in\otimes_{i=1}^L\C^2$ are defined as the partial trace over all the remaining orbitals. Namely
\begin{equation}\label{RDM1}
\rho_i^{(1)} = \mathrm{Tr}_{1,\dots,\not i,\dots,L} |\Uppsi \rangle \langle \Uppsi|
\end{equation}
and
\begin{equation}\label{RDM2}
\rho_{ij}^{(2)} = \mathrm{Tr}_{1,\dots,\not i,\dots, \not j, \dots,L} |\Uppsi \rangle \langle \Uppsi|
\end{equation}
where $|\Uppsi\rangle \langle\Uppsi|$ denotes the orthogonal projector onto $\Uppsi$. Thus $\rho_i^{(1)}$ and $\rho_{ij}^{(2)}$ are nonnegative unit-trace operators on, respectively, the single-site tensor factor $\C^2=\mbox{Span}\{\chi_0,\chi_1\}$ and the two-site space $\C^2\otimes\C^2=\mbox{Span}\{\chi_0\otimes\chi_0,\chi_0\otimes\chi_1,\chi_1\otimes\chi_0,\chi_1\otimes\chi_1\}$. 

Even in the case of Slater determinants, the innocent looking two-orbital RDM \eqref{RDM2} exhibits considerable complexity. This is because it does not describe entanglement between {\it particles} (which is absent for Slater determinants) but entanglement between {\it sites}. Moreover this entanglement depends on the ordering of the nodes of the network. To document all this rigorously, we worked out the two-orbital RDM explicitly. The result appears to be new.

\begin{prop}[One-orbital and two-orbital RDM of a Slater determinant]
\label{prop:orbital_density_matrix}
The one-orbital density matrix of a Slater determinant $\Psi$ (eq.~\eqref{eq:tensor}) is the $2 \times 2$ matrix 
\[
\rho^{(1)}_i = \begin{pmatrix} 1-\|u_i\|^2 & 0 \\ 0 & \|u_i\|^2 \end{pmatrix}.
\]
For $1 \leq i < j \leq L$, the two-orbital density matrix of a Slater determinant $\Psi$ is the $4 \times 4$ matrix
\[
\rho^{(2)}_{ij} = \begin{pmatrix} 1-\|u_i\|^2-\|u_j\|^2+G & & & \\ & \|u_i\|^2-G & \rho^{(2)}_{ij}(01,10) &  \\ & \rho^{(2)}_{ij}(10,01) & \|u_j\|^2-G &  \\  & & & G \end{pmatrix},
\]
where $G = \|u_i\|^2\|u_j\|^2 - \langle u_i,  u_j\rangle^2$ and 
\begin{equation*}
\rho^{(2)}_{ij}(10,01) = \rho^{(2)}_{ij}(01,10) = \sum\limits_{k=0}^{j-i-1} \sum\limits_{\gamma \in \binom{[i+1:j-1]}{k}} (-1)^{k+1}
\det \begin{pmatrix}
\mathbf{0}_{k+1 \times k+1} & U_{\gamma \cup \lbrace j \rbrace}^T \\
U_{\lbrace i \rbrace \cup \gamma} & \mathrm{Id}_N - U_{\gamma^c} U_{\gamma^c}^T \\ 
\end{pmatrix} ,
\end{equation*}
where $\gamma^c = [i+1:j-1]\setminus \gamma$.
\end{prop}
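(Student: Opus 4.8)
The plan is to compute the reduced density matrices directly from the explicit coefficients \eqref{slater_coefficient} of the occupation-representation tensor $\Uppsi$. Recall that for a multi-index $I=\{i_1<\dots<i_N\}$ the coefficient is $\Uppsi_I = \det(u_{i_1}\cdots u_{i_N})$, i.e. the $N\times N$ minor of $U$ on the columns indexed by $I$, and all coefficients with $|I|\neq N$ vanish. The matrix element of $\rho^{(1)}_i$ (resp. $\rho^{(2)}_{ij}$) between basis vectors $\chi_{\mu_i}$ (resp. $\chi_{\mu_i}\otimes\chi_{\mu_j}$) is obtained by summing $\Uppsi_{\mu_1\dots\mu_L}\overline{\Uppsi_{\mu_1'\dots\mu_L'}}$ over all the traced-out indices, which must agree on both sides. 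So the $(1,1)$-entry of $\rho^{(1)}_i$ is $\sum_{I\not\ni i}|\det U_I|^2$, the $(2,2)$-entry is $\sum_{I\ni i}|\det U_I|^2$, and similarly for the four diagonal entries of $\rho^{(2)}_{ij}$ indexed by $(\mu_i,\mu_j)\in\{0,1\}^2$. The off-diagonal entries of $\rho^{(2)}_{ij}$ are zero except the $(01,10)$ and $(10,01)$ entries, because a nonzero cross term requires two multi-indices $I,I'$ of size $N$ that agree outside $\{i,j\}$ but differ inside; the only possibility is $i\in I\setminus I'$, $j\in I'\setminus I$ (or vice versa), which forces $\mu_i\mu_j=10$ against $\mu_i'\mu_j'=01$.

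The first block of steps is to evaluate the diagonal sums via the Cauchy--Binet formula. Since $U$ is a partial isometry, $UU^T=\mathrm{Id}_N$, Cauchy--Binet gives $\sum_{|I|=N}|\det U_I|^2 = \det(UU^T)=1$, which recovers normalization. Fixing which of the columns $u_i,u_j$ are included, one writes $\sum_{I\ni i,\, I\not\ni j}|\det U_I|^2$ as a sum of squared minors of the matrix $U$ with column $i$ forced in and column $j$ forbidden; expanding each such minor along its forced column and applying Cauchy--Binet to the remaining free columns produces a Gram-type determinant. The cleanest route is to use the identity $\sum_{S}|\det(u_i\;|\;U_S)|^2 = \det\begin{pmatrix}\|u_i\|^2 & u_i^T W\\ W^T u_i & W^TW\end{pmatrix}$-type expansions, but it is simpler to observe directly that $\|u_i\|^2=\sum_{I\ni i}|\det U_I|^2$ (project the identity $UU^T=\mathrm{Id}_N$ onto appropriate rank-one pieces, or use that $\|u_i\|^2=(U^TU)_{ii}$ together with Cauchy--Binet applied columnwise), and then $G:=\sum_{I\ni i,\, I\ni j}|\det U_I|^2$ equals the $2\times 2$ principal minor $\det\begin{pmatrix}\|u_i\|^2 & \langle u_i,u_j\rangle\\ \langle u_i,u_j\rangle & \|u_j\|^2\end{pmatrix}=\|u_i\|^2\|u_j\|^2-\langle u_i,u_j\rangle^2$, again by Cauchy--Binet applied to the two forced columns against the complementary block. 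The four diagonal entries then follow by inclusion--exclusion: $\rho^{(2)}_{ij}(00,00)=1-\|u_i\|^2-\|u_j\|^2+G$, $\rho^{(2)}_{ij}(10,10)=\|u_i\|^2-G$, $\rho^{(2)}_{ij}(01,01)=\|u_j\|^2-G$, $\rho^{(2)}_{ij}(11,11)=G$.

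The main obstacle is the off-diagonal element $\rho^{(2)}_{ij}(10,01)$. Here one must sum $\det U_I \cdot \overline{\det U_{I'}}$ over all pairs with $I=\{i\}\cup\gamma\cup S$, $I'=\{j\}\cup\gamma\cup S$, where $S$ ranges over $(N-1-|\gamma|)$-subsets of $[1:L]\setminus[i:j]$ and, crucially, $\gamma$ ranges over subsets of the "interior" index set $[i+1:j-1]$; the sign bookkeeping from reordering $\{i\}\cup\gamma$ versus $\{j\}\cup\gamma$ into increasing order is what produces the interior set $\binom{[i+1:j-1]}{k}$ and the factor $(-1)^{k}$ (one checks that moving $i$ past the $k$ elements of $\gamma$ below... actually past none, while moving $j$ backward past the $k$ elements of $\gamma$ costs $(-1)^k$, plus an overall sign). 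For fixed $\gamma$, summing over $S$ the product of the two minors is a mixed Cauchy--Binet sum $\sum_S \det(\text{cols }\{i\}\cup\gamma\cup S)\det(\text{cols }\{j\}\cup\gamma\cup S)$; such a sum equals (up to sign) the determinant of a bordered matrix of the block form $\begin{pmatrix}\mathbf{0} & U_{\gamma\cup\{j\}}^T\\ U_{\{i\}\cup\gamma} & \mathrm{Id}_N-U_{\gamma^c}U_{\gamma^c}^T\end{pmatrix}$, where $\gamma^c=[i+1:j-1]\setminus\gamma$ and $\mathrm{Id}_N-U_{\gamma^c}U_{\gamma^c}^T$ is the Gram matrix after removing the span of the discarded interior columns — this is the "generalized Cauchy--Binet / Jacobi" identity expressing a sum over complementary column subsets as a single determinant of a Schur-complement-like block matrix. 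I would prove this block-determinant identity as a standalone lemma (by row/column operations reducing it to ordinary Cauchy--Binet, or by expanding the bordered determinant and matching terms), then assemble the final formula, and finally verify Hermiticity $\rho^{(2)}_{ij}(01,10)=\overline{\rho^{(2)}_{ij}(10,01)}$, which here is automatic since everything is real. Getting the signs and the exact index ranges ($\gamma\subset[i+1:j-1]$, the summation cap at $k=j-i-1$) correct is the delicate part; the rest is systematic Cauchy--Binet.
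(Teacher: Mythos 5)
Your proposal follows essentially the same route as the paper: diagonal entries of the one- and two-orbital RDMs via Cauchy--Binet on the partial isometry $U$ together with the normalization $\det(UU^T)=1$, and the off-diagonal $(10,01)$ entry by partitioning the $(N-1)$-subset as $\alpha=\gamma\cup\beta$ with $\gamma\subset[i+1:j-1]$ and evaluating the inner sum over $\beta$ by a bordered-determinant variant of Cauchy--Binet. The standalone lemma you propose (the ``generalized Cauchy--Binet / Jacobi'' bordered-determinant identity) is precisely the paper's Proposition~\ref{prop:modified_cauchy_binet}, and the sign $(-1)^{k+1}$ comes out of that lemma (with $|T|=k+1$) rather than from an ad hoc column-reordering count, so the bookkeeping is actually cleaner than your sketch suggests.
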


The proof of Proposition~\ref{prop:orbital_density_matrix} can be found in Section~\ref{subsec:proof-prop-RDM-Slater}. We give three special cases in which the formula for the off-diagonal elements simplifies.

\begin{prop} \label{prop:more_on_RDMs}
a) For $j=i+1$, the formula of the two-orbital density matrix simplifies to 
\[
\rho^{(2)}_{ij} = \begin{pmatrix} 1-\|u_i\|^2-\|u_j\|^2+G & & & \\ & \|u_i\|^2-G & \langle u_i,\, u_j\rangle &  \\ & \langle u_i, \, u_j\rangle & \|u_j\|^2-G &  \\  & & & G \end{pmatrix}.
\]
b) For $j=i+2$, the off-diagonal elements of the two-orbital density matrix depend on $u_{i+1}$:
\[
\rho^{(2)}_{ij}(10,01)=\rho^{(2)}_{ij}(01,10) = \langle u_j, \, \Bigl(\mathrm{Id}_N - 2(||u_{i+1}||^2 - u_{i+1}u_{i+1}^T)\Bigr) u_i\rangle.
\]
c) For $N=2$ and arbitrary $i$, $j$ with $i<j$, 
\[
\rho^{(2)}_{ij}(10,01)=\rho^{(2)}_{ij}(01,10) = \sum_{k \, : \, i<k<j} \langle u_j, \, \Bigl(\mathrm{Id}_N - 2(||u_{k}||^2 - u_{k}u_{k}^T)\Bigr) u_i\rangle.
\]
\end{prop}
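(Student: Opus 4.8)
The plan is to derive all three formulas from the general expression for $\rho^{(2)}_{ij}(10,01)$ in Proposition~\ref{prop:orbital_density_matrix} by explicitly evaluating the determinant in each special case. First I would fix notation: for $1\le i<j\le L$ the off-diagonal element is
\[
\rho^{(2)}_{ij}(10,01) = \sum_{k=0}^{j-i-1}\sum_{\gamma\in\binom{[i+1:j-1]}{k}} (-1)^{k+1}
\det\begin{pmatrix} \mathbf{0}_{k+1\times k+1} & U_{\gamma\cup\{j\}}^T \\ U_{\{i\}\cup\gamma} & \mathrm{Id}_N - U_{\gamma^c}U_{\gamma^c}^T\end{pmatrix},
\]
and I would use the Schur-complement / block determinant identity: when the top-left block is zero,
$\det\begin{pmatrix} \mathbf{0} & B^T \\ C & D\end{pmatrix}$ can be reduced by an appropriate cofactor expansion to a sum of signed minors pairing the columns of $B$ with those of $C$ through $D$.

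For part (a), $j=i+1$: here the index interval $[i+1:j-1]=[i+1:i]$ is empty, so the only term is $k=0$, $\gamma=\emptyset$, $\gamma^c=\emptyset$, hence $U_{\gamma^c}U_{\gamma^c}^T=\mathbf{0}_{N\times N}$ and the matrix becomes $\begin{pmatrix}0 & u_j^T \\ u_i & \mathrm{Id}_N\end{pmatrix}$ (a $1\times 1$ zero block bordered by the single columns $u_i$, $u_j^T$). The overall sign is $(-1)^{0+1}=-1$. Expanding this $(N+1)\times(N+1)$ determinant by the first row and then the first column, or directly invoking the identity $\det\begin{pmatrix}0 & b^T \\ a & \mathrm{Id}\end{pmatrix} = -\,b^T a$, yields $-(-u_j^T u_i) = \langle u_i, u_j\rangle$, matching the claimed entry. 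This is essentially a one-line computation once the block formula is in hand.

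For part (b), $j=i+2$: now $[i+1:j-1]=\{i+1\}$, so there are two terms, $k=0$ with $\gamma=\emptyset$ (then $\gamma^c=\{i+1\}$) and $k=1$ with $\gamma=\{i+1\}$ (then $\gamma^c=\emptyset$). The $k=0$ term, with sign $-1$, is $\det\begin{pmatrix}0 & u_j^T \\ u_i & \mathrm{Id}_N - u_{i+1}u_{i+1}^T\end{pmatrix}$, which by the same bordered-determinant identity equals $-\,u_j^T(\mathrm{Id}_N - u_{i+1}u_{i+1}^T)u_i$, so with the sign it contributes $\langle u_j, (\mathrm{Id}_N - u_{i+1}u_{i+1}^T)u_i\rangle$. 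The $k=1$ term, with sign $(-1)^{2}=+1$, is $\det\begin{pmatrix}\mathbf{0}_{2\times 2} & (u_{i+1}\ u_j)^T \\ (u_i\ u_{i+1}) & \mathrm{Id}_N\end{pmatrix}$, and by the block identity for a $2\times 2$ zero block this evaluates to $\det\!\left((u_i\ u_{i+1})^T(u_{i+1}\ u_j)\right) = \det\begin{pmatrix}\langle u_i,u_{i+1}\rangle & \langle u_i,u_j\rangle \\ \|u_{i+1}\|^2 & \langle u_{i+1},u_j\rangle\end{pmatrix}$ up to a sign I would pin down carefully. Expanding that $2\times2$ determinant gives $\langle u_i,u_{i+1}\rangle\langle u_{i+1},u_j\rangle - \|u_{i+1}\|^2\langle u_i,u_j\rangle = u_j^T u_{i+1}u_{i+1}^T u_i - \|u_{i+1}\|^2 u_j^T u_i = -\langle u_j,(\|u_{i+1}\|^2 - u_{i+1}u_{i+1}^T)u_i\rangle$. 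Adding the two contributions yields $\langle u_j,(\mathrm{Id}_N - u_{i+1}u_{i+1}^T)u_i\rangle - \langle u_j,(\|u_{i+1}\|^2 - u_{i+1}u_{i+1}^T)u_i\rangle = \langle u_j,(\mathrm{Id}_N - 2(\|u_{i+1}\|^2 - u_{i+1}u_{i+1}^T))u_i\rangle$, as claimed.

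For part (c), $N=2$ with arbitrary $i<j$: the key observation is that since $U$ has only $N=2$ rows, the block matrix $\begin{pmatrix}\mathbf{0}_{k+1\times k+1} & U_{\gamma\cup\{j\}}^T \\ U_{\{i\}\cup\gamma} & \mathrm{Id}_2 - U_{\gamma^c}U_{\gamma^c}^T\end{pmatrix}$ has size $(k+3)\times(k+3)$ but rank considerations force its determinant to vanish whenever $k\ge 2$: the off-diagonal blocks $U_{\{i\}\cup\gamma}$ and $U_{\gamma\cup\{j\}}^T$ have rank at most $2$, and a careful rank count (the bottom-right block is $2\times 2$, the off-diagonal blocks are $2\times(k+1)$ and $(k+1)\times 2$, the top-left is zero) shows the whole matrix is singular once $k+1>2$. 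Hence only $k=0$ and $k=1$ survive. The $k=0$ terms (sign $-1$, $\gamma=\emptyset$, $\gamma^c=[i+1:j-1]$) give, by the bordered identity, $u_j^T(\mathrm{Id}_2 - U_{\gamma^c}U_{\gamma^c}^T)u_i = u_j^T(\mathrm{Id}_2 - \sum_{i<k<j}u_ku_k^T)u_i$, contributing $\langle u_j,(\mathrm{Id}_2 - \sum_{i<k<j}u_ku_k^T)u_i\rangle$ with the sign; the $k=1$ terms (sign $+1$) run over $\gamma=\{m\}$ for each $m$ with $i<m<j$, and each gives, as in part (b), $-\langle u_j,(\|u_m\|^2 - u_mu_m^T)u_i\rangle$ — here I must verify that the $\mathrm{Id}_2 - U_{\gamma^c}U_{\gamma^c}^T$ block's full content does not matter once $\gamma$ is a singleton, which follows because with a $2\times2$ zero-block bordered only by the two-dimensional $u$'s, the $(k+3)\times(k+3)=4\times4$ determinant only picks up the $2\times2$ minor $\det\!\left((u_i\ u_m)^T(u_m\ u_j)\right)$ and is independent of the bottom-right block. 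Summing, $\langle u_j, u_i\rangle - \sum_{i<k<j}\langle u_j,u_ku_k^T u_i\rangle - \sum_{i<m<j}\langle u_j,(\|u_m\|^2-u_mu_m^T)u_i\rangle = \sum_{i<k<j}\langle u_j,(\mathrm{Id}_2 - 2(\|u_k\|^2 - u_ku_k^T))u_i\rangle$ — where I use that the number of singleton $\gamma$'s equals the number of $k$'s, so the bare $\langle u_j,u_i\rangle$ combines correctly term-by-term; I would double-check this bookkeeping, since it is the one place where an off-by-one in how the $\langle u_i,u_j\rangle$ term is distributed across the sum could creep in.

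The main obstacle I anticipate is not the ideas but the sign and index bookkeeping: correctly tracking the $(-1)^{k+1}$ prefactor through the cofactor expansion of the bordered determinant, getting the sign of the $2\times2$ Gram-type minor right in the $k=1$ terms, and — in part (c) — rigorously justifying the vanishing of all $k\ge 2$ terms via a clean rank argument rather than hand-waving. I would isolate a small lemma: for vectors $a_1,\dots,a_{k+1}\in\R^N$, $b_1,\dots,b_{k+1}\in\R^N$ and a symmetric $D\in\R^{N\times N}$, $\det\begin{pmatrix}\mathbf{0}_{(k+1)\times(k+1)} & B^T \\ B' & D\end{pmatrix}$ (with $B=(b_1\cdots b_{k+1})$, $B'=(a_1\cdots a_{k+1})$) equals $(-1)^{k+1}\det((B')^T B^{T\,T}\!\cdot\text{something})$ — more precisely reduces, by the Cauchy–Binet / block-determinant formula, to $(-1)^{?}\sum$ of products of minors of $B'$, $B$ weighted by minors of $D$ — and then specialize. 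Proving that lemma once and applying it three times keeps the argument clean and confines the sign headaches to a single place.
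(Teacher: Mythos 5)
The overall plan---specialize the general formula of Proposition~\ref{prop:orbital_density_matrix} and evaluate the bordered determinants---is the natural one, and indeed the paper gives no separate proof of Proposition~\ref{prop:more_on_RDMs}, treating it as a direct specialization. Part (a) is fine. But there is a genuine error in your key identity that then propagates through parts (b) and (c): you repeatedly use $\det\begin{pmatrix} 0 & b^T \\ a & D\end{pmatrix} = -\,b^T D a$, which is only valid when $D = \mathrm{Id}_N$. The correct identity (for symmetric $D$) is $\det\begin{pmatrix} 0 & b^T \\ a & D\end{pmatrix} = -\,b^T\,\mathrm{adj}(D)\,a$, and for $D = \mathrm{Id}_N - vv^T$ one has $\mathrm{adj}(D) = (1-\|v\|^2)\mathrm{Id}_N + vv^T$, which is \emph{not} $\mathrm{Id}_N - vv^T$. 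In part (b) your $k=0$ contribution should therefore be $\langle u_j, [(1-\|u_{i+1}\|^2)\mathrm{Id}_N + u_{i+1}u_{i+1}^T]u_i\rangle$, not $\langle u_j, (\mathrm{Id}_N - u_{i+1}u_{i+1}^T)u_i\rangle$; your final ``adding the two contributions'' equation is then also wrong as algebra (the left side equals $(1-\|u_{i+1}\|^2)\langle u_j, u_i\rangle$, not the right side), so two mistakes happen to land on the stated answer without actually proving it. With the correct adjugate, the $k=0$ and $k=1$ terms sum to $\langle u_j, [\mathrm{Id}_N - 2(\|u_{i+1}\|^2\mathrm{Id}_N - u_{i+1}u_{i+1}^T)]u_i\rangle$ as claimed.

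Your worry about the bookkeeping in part (c) is well founded and worth chasing down. Your $k=0$ term again needs the adjugate, $\mathrm{adj}(\mathrm{Id}_2 - X) = (1-\mathrm{tr}X)\mathrm{Id}_2 + X$ with $X = \sum_{i<m<j}u_m u_m^T$, giving $(1-\sum_{i<m<j}\|u_m\|^2)\langle u_j,u_i\rangle + \sum_{i<m<j}\langle u_j, u_m u_m^T u_i\rangle$, and adding the $k=1$ contribution $\sum_{i<m<j}[\langle u_j, u_m u_m^T u_i\rangle - \|u_m\|^2\langle u_j,u_i\rangle]$ yields
\[
\rho^{(2)}_{ij}(10,01) \;=\; \Bigl\langle u_j,\,\bigl[\mathrm{Id}_N - 2\textstyle\sum_{i<k<j}(\|u_k\|^2\mathrm{Id}_N - u_k u_k^T)\bigr]u_i\Bigr\rangle,
\]
i.e.\ the $\mathrm{Id}_N$ should sit \emph{outside} the sum over $k$. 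The formula as printed in the statement, with $\mathrm{Id}_N$ inside each summand, over-counts the $\langle u_j, u_i\rangle$ piece by a factor of $j-i-1$ and fails for $j \ge i+3$ (one can check this numerically, e.g.\ for $N=2,\ L=4,\ (i,j)=(1,4)$ with a generic $2\times 4$ partial isometry). So the ``off-by-one'' you flagged is real, but it is not something that can be repaired within your derivation as written; both the adjugate error and the final summing identity must be corrected, and once they are, the result one obtains is the version with $\mathrm{Id}_N$ pulled out of the sum.
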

Here $\langle u_i,\, u_j\rangle$ denotes the scalar product $u_i^T u_j$. Thus in general, the off-diagonal term of $\rho^{(2)}_{ij}$ depends on the occupancy of the orbitals between $i$ and $j$, and hence so does the mutual information $I\! M_{ij}$.

Once the mutual information matrix is computed, the Fiedler ordering is obtained as follows. It is a simple fact that the entries of the mutual information matrix are nonnegative (see, e.g., \cite{carlen2014remainder}). Hence it can be interpreted as weighted adjacency matrix of the complete graph of the tensor network. The graph Laplacian $\mathcal{L}$ defined by
\[
\mathcal{L}_{ij} = \begin{cases} \sumlim{k=1}{L} I\! M_{ik} & \text{if } i=j \\
-I\! M_{ij} & \text{else} \end{cases}
\]
is computed. 
The second eigenvector of the graph Laplacian is called the \emph{Fiedler vector} and ordering its entries according to its values gives the so-called \emph{Fiedler ordering}.
In spectral graph theory (see, e.g., \cite{vonLuxburg2007spectralgraph}), the Fiedler order enables one to regroup nodes of a graph according to their connected components. 
Hence its use for ordering the one-particle basis functions makes intuitive sense if the mutual information is the right choice of metric on their complete graph. 

We remark that the mutual information matrix, and hence the Fiedler ordering, is not entirely independent of the original ordering, as the latter can influence the two-orbital RDMs (see Proposition~\ref{prop:more_on_RDMs}). This phenomenon illustrates the subtleties of applying the TT format, which originated in spin chain theory, to quantum chemistry. 

\subsection{Best prefactor order}
\label{subsec:best_prefactor_order}

Theorem~\ref{thm:sing_values_TT_of_slater} suggests a new scheme to find a good ordering of the one-particle functions $\varphi_i$. Assuming that the largest singular value of a matrix reshape of $\Psi$ remains large for all reorderings, the tail of the singular value distribution can be lowered by finding the ordering that yields the smallest prefactor in Equation~\eqref{eq:singular_values_reshape_property}. 
To be more precise, one needs to solve the following discrete optimization problem:
\begin{multline}
\label{eq:discrete-optimization-prefactor-pb}    
\text{Find } \sigma \in \tbinom{[L]}{N} \text{ such that: } \\ \det\left(U_\sigma U_\sigma^T\right) \det\left(\mathrm{Id}_N - U_\sigma U_\sigma^T\right) \leq \det\left(U_\tau U_\tau^T\right) \det\left(\mathrm{Id}_N - U_\tau U_\tau^T\right) \quad \forall\, \tau \in \tbinom{[L]}{N}.
\end{multline}
We call its solution the {\it best prefactor order}. Being an optimum over permutations, this order has the desirable feature of being independent of the initial ordering of the basis. 

In the numerical tests below, \eqref{eq:discrete-optimization-prefactor-pb} can be solved exactly, but the complexity grows as $\binom{L}{N}$ and so for larger values of $N$ and $L$ (say, $N=20$ and $L=40$) solving it exactly is out of reach. 
However, an approximate solution can always be computed using a simulated annealing algorithm (see Algorithm~\ref{algo:new_scheme}).
We call this approximate solution {\it approximate best prefactor}. 
The Fiedler order, which is widely used in practice and described above, typically gives a better order of the orbitals than a random ordering guess. It can thus be used as a starting point for Algorithm~\ref{algo:new_scheme}. 

\begin{algorithm}[!h]
\DontPrintSemicolon
 \KwData{$U$,\ $i_\mathrm{max}$,\ $\mathrm{AS}$,\ $\mathrm{VS}, \tau_0, \lambda$ \tcp*[r]{$\mathrm{AS}$: Active Space, $\mathrm{VS}$:Virtual Space, $\tau_0$: initial temperature, $\lambda$: temperature decay rate} }
 \KwResult{$\mathrm{AS}, \ \mathrm{VS}$}
 $i=0$\;
 $\tau=\tau_0$\;
 \While{$i<i_\mathrm{max}$}{
 $\tau = \lambda \tau$\;
 $j=\mathrm{rand}(\mathrm{AS})$\;
 $k=\mathrm{rand}(\mathrm{VS})$\;
 $\mathrm{TAS} = \mathrm{AS} \cup \lbrace k \rbrace \setminus \lbrace j \rbrace$\tcp*{$\mathrm{TAS}$: test active space}\; 
 $\mathrm{TVS} = \lbrace 1,\dots,L \rbrace \setminus \mathrm{TAS}$\tcp*{$\mathrm{TVS}$: test virtual space}\;
  \uIf{$\det(U_{\mathrm{TAS}} U_{\mathrm{TAS}}^T) \det(U_{\mathrm{TVS}} U_{\mathrm{TVS}}^T) < \det(U_{\mathrm{AS}} U_{\mathrm{AS}}^T) \det(U_{\mathrm{VS}} U_{\mathrm{VS}}^T)$}{
   $\mathrm{AS} = \mathrm{TAS}$\;
   $\mathrm{VS} = \mathrm{TVS}$\;
   }
  \ElseIf{$\exp\left( \frac{\det(U_{\mathrm{TAS}} U_{\mathrm{TAS}}^T) \det(U_{\mathrm{TVS}} U_{\mathrm{TVS}}^T) - \det(U_{\mathrm{AS}} U_{\mathrm{AS}}^T) \det(U_{\mathrm{VS}} U_{\mathrm{VS}}^T)}{\tau} \right) > \mathrm{rand}()$}{
   $\mathrm{AS} = \mathrm{TAS}$\;
   $\mathrm{VS} = \mathrm{TVS}$\;
   }
  $i=i+1$\;
 }
 \caption{Simulated annealing ordering scheme}
 \label{algo:new_scheme}
\end{algorithm}
\subsection{Best weighted prefactor order}
\label{subsec:best_weighted_prefactor}
For a correlated state, given by a sum of Slater determinants, the behavior of the singular values is more intricate. 
Nonetheless, by Theorem~\ref{cor:strongly_correlated_hosv} and adapting the ordering strategy for Slater determinants described previously, it is possible to optimize the ordering to improve the decay the singular values.
More precisely, for a state \(
\Psi = \sumlim{\substack{I \subset \lbrace 1,\dots,L \rbrace \\ |I|=N}}{} \alpha_I \Psi_{I},
\)
where $\Psi_{I} = | \psi_{i_1},...,\psi_{i_N} \rangle$, $I=(i_1,...,i_N)$, we propose to choose 
\[
\sigma = \argmin\limits_{\tau \in S_L} \sumlim{\substack{I \subset \lbrace 1,\dots,L \rbrace \\ |I|=N}}{} |\alpha_I| p_I(\tau),
\]
with $p_I(\tau)$ being the prefactor of the Slater determinant $\Psi_I$ defined in~\eqref{prefac}.
We call the solution \emph{best weighted prefactor order}. Even for weakly correlated states, \emph{i.e.} when one Slater determinant is dominant in the expansion, numerical tests (see the next section) show that it is preferable to optimize the ordering by taking into account all the determinants as above, instead of focusing only on the dominant one.

\subsection{An example: minimal-basis H$_2$}
\rm 
To illustrate the different ordering methods, we now apply them to the minimal-basis H$_2$ wavefunction  introduced in Example 1.1 at the end of section~\ref{sec:Fock}. Starting point is the occupation representation \eqref{H2Slateroccrep} of the state \eqref{H2Slater}. In this example, $L=4$, so -- recalling that all coeffients were taken to be real -- the occupation representation $\Uppsi$ belongs to $\otimes_{i=1}^4 \R^2$, i.e. it is a four-index tensor $(\Uppsi_{\mu_1\mu_2\mu_3\mu_4})$ with $2^4$ real components. We will compute the singular values of the matrix re-shape $\Uppsi^{\mu_1\mu_2}_{\mu_3\mu_4}\in \R^{2^2 \times 2^2}$, for the different orderings of the basis set delivered by all the above ordering schemes. To avoid degenerate cases we assume that all coefficients $c$, $s$, $c'$, $s'$ in \eqref{H2Slater} are nonzero.    
\\[2mm]
{\bf Canonical order.} We abbreviate the single-particle basis states as $\{A\uparrow, \, A\downarrow, \, B\uparrow, \, B\downarrow\}$. Directly from \eqref{H2Slateroccrep} we see that with respect to the canonical order in which the bonding orbital with either spin comes first,
\begin{equation}\label{H2canonical}
            A\uparrow  \; A\downarrow \; B\uparrow \; B\downarrow ,
\end{equation}
the re-shape $\Uppsi^{\mu_1\mu_2}_{\mu_3\mu_4}$ is
\begin{center}
\begin{tabular}{c | c c c c}
              $_{\mu_1\,\mu_2} \!\!\! \diagdown \!\! ^{\mu_3\,\mu_4}$\hspace*{-2mm} & 00 & 01 & 10 & 11  \\
              \hline
              00\textcolor{white}{00} &   &   &   & $ss'$ \\
              01\textcolor{white}{00} &   &   & $-sc$ &   \\
              10\textcolor{white}{00} &   & $cs'$ &   &   \\
              11\textcolor{white}{00} & $cc'$ &   &   &   \\  
              \end{tabular}
\end{center}
The singular values are 
$$
        (cc')^2, \, (cs')^2, \, (sc')^2, \, (ss')^2
$$
and the rank of the matrix re-shape is $4$. 
 
As an off-spring we recover the inversion symmetry of the distribution of singular values predicted by Theorem~\ref{thm:sing_values_TT_of_slater}, since $cc' \cdot ss' = cs' \cdot sc'$. 
\\[2mm]
{\bf Fiedler order.} We begin by working out the one- and two-orbital density matrices and the  corresponding entropies. The one-orbital quantities are elementary to compute, they are  
\[
  \rho^{(1)}_{A\uparrow} = \begin{pmatrix} s^2 &  0 \\ 0 & c^2 \end{pmatrix}, \;\;\;
  \rho^{(1)}_{A\downarrow} = \begin{pmatrix} s'{^2} &  0 \\ 0 & c'{^2} \end{pmatrix}, \;\;\;
  \rho^{(1)}_{B\uparrow} = \begin{pmatrix} c^2 &  0 \\ 0 & s^2 \end{pmatrix}, \;\;\;
  \rho^{(1)}_{B\uparrow} = \begin{pmatrix} c'{^2} &  0 \\ 0 & s'{^2} \end{pmatrix}. \;\;\;
\]
It follows that 
\begin{align*}
  s^{(1)}_{A\uparrow} &= s^{(1)}_{B\uparrow} = -c^2\log c^2-s^2\log s^2 =: s_\uparrow \in (0,1], \\
  s^{(1)}_{A\downarrow} &= s^{(1)}_{B\downarrow} = -c'{^2}\log c'{^2}-s'{^2}\log s'{^2} =: s_\downarrow \in (0,1].
  \end{align*}
As regards the two-orbital density matrices, we find after some calculation that 
\begin{scriptsize}
\[ \hspace*{-5mm}
  \rho^{(2)}_{A\uparrow A\downarrow} = \begin{pmatrix}
              s^2 s'{^2}\hspace*{-4mm} &  &   &   \\
              & s^2 c'{^2}\hspace*{-4mm}  &   &   \\
              &  & c^2 s'{^2}\hspace*{-4mm}   &   \\
              &  &  &  c^2 c'{^2}   \end{pmatrix}\! , \;  
  \rho^{(2)}_{B\uparrow B\downarrow} = \begin{pmatrix}
              c^2 c'{^2}\hspace*{-4mm} &  &   &   \\
              & c^2 s'{^2}\hspace*{-4mm}  &   &   \\
              &  & s^2 c'{^2}\hspace*{-4mm}   &   \\
              &  &  &  s^2 s'{^2}   \end{pmatrix}\! , \;  
  \rho^{(2)}_{A\uparrow B\downarrow} = \begin{pmatrix}
              s^2 c'{^2}\hspace*{-4mm} &  &   &   \\
              & s^2 s'{^2}\hspace*{-4mm}  &   &   \\
              &  & c^2 c'{^2}\hspace*{-4mm}   &   \\
              &  &  &  c^2 s'{^2}   \end{pmatrix}\! , \;  
  \rho^{(2)}_{A\downarrow B\uparrow} = \begin{pmatrix}
              c^2 c'{^2}\hspace*{-4mm} &  &   &   \\
              & c^2 s'{^2}\hspace*{-4mm}  &   &   \\
              &  & s^2 s'{^2}\hspace*{-4mm}   &   \\
              &  &  &  s^2 c'{^2}   \end{pmatrix}\! .  
 \]
 \end{scriptsize}
 
 \noindent
It follows that $S^{(2)}=-\mbox{tr} \, \rho^{(2)}\log \rho^{(2)}=:S_{\uparrow\downarrow}$ is the same for all four matrices. Moreover writing out the above trace and using $c^2+s^2=c'{^2}+s'{^2}=1$ we find that
\begin{equation}\label{cancellation}
   S_{\uparrow\downarrow} = s_\uparrow + s_\downarrow.
\end{equation}
The two remaining two-orbital RDMs contain off-diagonal terms. We find using Proposion~\ref{prop:more_on_RDMs}~c) that
\begin{scriptsize}
\[
  \rho^{(2)}_{A\uparrow B\uparrow} = \begin{pmatrix}
              0  &   &   &   \\
              &  s^2 & -cs(c'{^2}-s'{^2})   &   \\
              &  -cs(c'{^2}-s'{^2}) & c^2 &   \\
              &  &  &  0   \end{pmatrix}\! , \;\;\;
  \rho^{(2)}_{A\downarrow B\downarrow} = \begin{pmatrix}
              0  &   &   &   \\
              &  s'{^2} & c's'(c{^2}-s{^2})   &   \\
              &  c's'(c{^2}-s{^2}) & c'{^2} &   \\
              &  &  &  0   \end{pmatrix}\! .
\]
\end{scriptsize}

\noindent
We denote the associated entropies by $S^{(2)}_{A\uparrow B\uparrow}=:S_{\uparrow\uparrow}$, $S^{(2)}_{A\downarrow B\downarrow}=:S_{\downarrow\downarrow}$. The mutual information matrix and graph Laplacian are thus, using the vanishing of all nearest-neighbour elements of $I\! M$ by \eqref{cancellation} and denoting $a:=2s_{\uparrow}-S_{\uparrow\uparrow}$, $b:=2s_{\downarrow}-S_{\downarrow\downarrow}$, 
\begin{center}
$I\! M =$ 
\begin{tabular}{c | c c c c}
                  & $A\uparrow$ & $A\downarrow$ & $B\uparrow$ & $B\downarrow$  \\
              \hline
              $A\uparrow$   &  0  & 0  & a  & 0 \\
              $A\downarrow$ &  0  & 0  & 0  & b  \\
              $B\uparrow$   &  a  & 0  & 0  & 0 \\
              $B\downarrow$ &  0  & b  & 0  & 0  \\  
              \end{tabular} 
    $\,$  \textcolor{white}{,} $\;\;\;\;$ ${\mathcal L} =$  
\begin{tabular}{c | r r r r}
                  & $A\uparrow$ & $A\downarrow$ & $B\uparrow$ & $B\downarrow$  \\
              \hline
              $A\uparrow$   &  a  & 0  & -a  & 0 \\
              $A\downarrow$ &  0  & b  & 0  & -b  \\
              $B\uparrow$   &  -a  & 0  & a  & 0 \\
              $B\downarrow$ &  0  & -b  & 0  & b  \\  
              \end{tabular} 
    $\,$ \textcolor{white}{.} 
\end{center}
To determine the Fiedler ordering we need to find the second eigenvector of the graph Laplacian, alias Fiedler vector. The first eigenvector is always, by construction, the constant vector, with eigenvalue $0$. For the above $\mathcal{L}$, by inspection the remaining eigenvalues are $0$, $2a>0$, $2b>0$, with eigenvectors $(1 , -1 ,  1 , -1)$, $(1 , 0 , -1 , 0)$, $(0 , 1 , 0 , -1)$. The second eigenvector is thus $(1 , -1 , 1 , -1)$. It follows that the Fiedler ordering is
\begin{equation} \label{H2Fiedler}
                  A\uparrow  \; B\uparrow \; A\downarrow \; B\downarrow 
\end{equation} 
(up to re-ordering the orbitals in the left block, re-ordering the orbitals in the right block, and flipping the two blocks; none of this affects the singular values). 
The matrix re-shape $\Uppsi^{\mu_1\mu_2}_{\mu_3\mu_4}$ with respect to this ordering is
\begin{center}
\begin{tabular}{c | c c c c}
              $_{\mu_1\,\mu_2} \!\!\! \diagdown \!\! ^{\mu_3\,\mu_4}$\hspace*{-2mm} & 00 & 01 & 10 & 11  \\
              \hline
              00\textcolor{white}{00} &  0 &   &   &      \\
              01\textcolor{white}{00} &    & $ss'$  & $sc'$ &   \\
              10\textcolor{white}{00} &    & $cs'$ & $cc'$  &   \\
              11\textcolor{white}{00} &    &   &   & 0  \\  
              \end{tabular}
\end{center}
Since the middle block is the rank-1 matrix $\begin{pmatrix} c \\ s \end{pmatrix}\begin{pmatrix} c' & s' \end{pmatrix}$, the singular values are 
$$
        1, \, 0, \, 0, \, 0
$$
and the rank of the matrix re-shape is $1$. We see that the Fiedler order has dramatically improved the decay of the singular values. 
\\[2mm]
{\bf Best prefactor order.} First we note that, for arbitrary Slater determinants, the prefactor \eqref{prefac} -- just like the singular value distribution -- is invariant under switching the left and right blocks and re-ordering the states within each block. Thus we only need to consider the two orderinges \eqref{H2canonical} and \eqref{H2Fiedler}. With respect to the former respectively the latter, the partial isometry $U = (V \, | \, W) \in \R^{2\times 4}$ which represents the orbitals is
$$
    (V \, | \, W) = 
  \left(
  \begin{array}{cc|cc}
   c & 0 & s & 0 \\
   0 & c' & 0 & s' \\
   \end{array}
  \right)
 \;\;\; \mbox{respectively} \;\;\;
    (V \, | \, W) = 
  \left(
  \begin{array}{cc|cc}
   c & s & 0 & 0 \\
   0 & 0 & c' & s' \\
   \end{array}
  \right) .
$$ 
The prefactor is $p=\det(V^T V)\det(W^T W)=(\det V \, \det W)^2$. Thus it equals $(cc'ss')^2>0$ for the first and $0$ for the second matrix. Hence the best prefactor ordering is the second ordering, 
\begin{equation} \label{H2bestpref}
                  A\uparrow  \; B\uparrow \; A\downarrow \; B\downarrow. 
\end{equation}
The resulting singular value distribution of the matrix re-shape was already computed above, it is
$$
     1, \, 0, \, 0, \, 0
$$
and in particular the rank of the matrix re-shape is $1$. We see that, just like the Fiedler ordering, the best prefactor ordering dramatically improves the decay of the singular values. Moreover in this example we found that the Fiedler and best prefactor methods are {\it exactly equivalent!}
\section{Numerical comparison of the different ordering methods}
\label{sec:numerics}

\subsection{Tests on Slater determinants} \label{sec:testslat}

As a first comparison we tested all the above ordering schemes  
on Slater determinants \eqref{eq:tensor}. The results are given in Figure~\ref{fig:slater-comparison}. This figure shows the distribution of the singular values of the re-shape $\Psi^{\mu_1 \dots \mu_{L/2}}_{\mu_{L/2+1}\dots\mu_L}$ (which typically has the highest rank), for different ordering methods and averaged over 400 simulations. The partial isometry $U$ was obtained by taking the first $N$ rows of the orthogonal matrix $Q$ of the QR-decomposition of a random matrix of size $L \times L$ with i.i.d. standard normal entries. The ensuing Slater determinant $\Psi$ may thus be viewed as the {\it ground state of a random one-body Hamiltonian} acting on the $N$-body space $\mathcal{V}_N^L$.
The approximate best ordering was determined by taking an initial temperature $T_0=1$, a decay rate $\lambda=0.99$ and a maximum number of iterations $i_\mathrm{max}=\frac{1}{2} \binom{L}{N}$. In the left panel, and in the left panels of the subsequent figures, the indicated region within one standard deviation is the region where the logarithms of the singular values lie within one standard deviation of their mean. 

We observe that the Fiedler order improves the distribution of the higher-order singular values of the reshaped tensor beyond, say, the first 150 singular values by an order of magnitude. The best prefactor order and its approximate solution by simulated annealing, by contrast, improve it by four to five orders of magnitude on average.

\begin{figure}[!h]
\centering
\captionsetup[subfigure]{justification=centering}
    \begin{subfigure}[b]{0.48\textwidth}
        \centering
        \includegraphics[width=\textwidth]{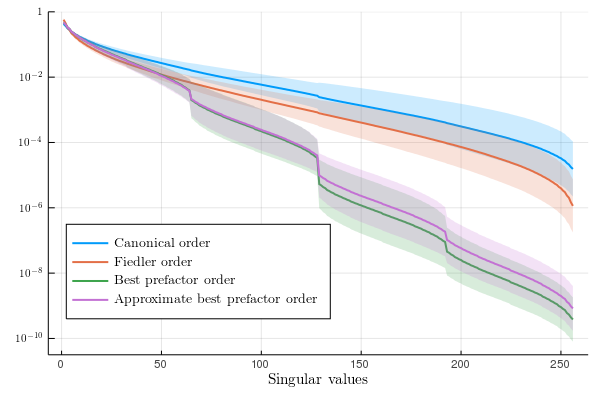}
        \caption{Mean (solid line) and region within one standard deviation (ribbon)}
    \end{subfigure}
        \begin{subfigure}[b]{0.48\textwidth}
            \centering
        \includegraphics[width=\textwidth]{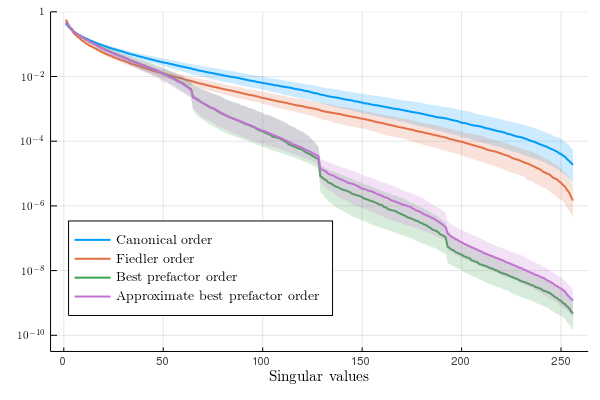}
        \caption{Median (solid line) and 0.25/0.75 quantiles (ribbon)}
    \end{subfigure}
    \caption{Singular values of $\Uppsi^{\mu_1 \dots \mu_{L/2}}_{\mu_{L/2+1}\dots\mu_L}$ for the determinantal state \eqref{eq:tensor} for $N=8$ electrons and $L=16$ orbitals, averaged over 400 simulations.}
\label{fig:slater-comparison}
\end{figure}


\subsection{Tests on correlated states} \label{sec:testcorr}

We have also tested all ordering schemes for sums of several Slater determinants. As a first example, we let 
\begin{equation} \label{weakcorr}
\Psi = \alpha_0 |\psi_1,...,\psi_N\rangle + \alpha_1 |\psi_1,...,\psi_{N-2},\psi_{N+1},\psi_{N+2}\rangle,
\end{equation}
where $\alpha_0=\sqrt{0.9}$, $\alpha_1=\sqrt{0.1}$, and the orbitals $(\psi_k)_{1 \leq k \leq N+2}$ are obtained by generating a partial isometry $U \in \R^{(N+2)\times L}$ as previously.
This state may be called \emph{weakly} correlated since it is a small perturbation of a noninteracting state.

In Figure~\ref{fig:static-comparison}, the distribution of the singular values of $\Uppsi^{\mu_1 \dots \mu_{L/2}}_{\mu_{L/2+1}\dots\mu_L}$ for different ordering schemes are shown. 
The best prefactor order of the dominant Slater is obtained by optimizing the prefactor corresponding to the Slater determinant $|\psi_1,...,\psi_N\rangle$.  
By contrast, the best weighted prefactor order as described in section  \ref{subsec:best_weighted_prefactor} corresponds to simultaneous optimization of the prefactor for both Slater determinants. 
In these numerical tests, the best weighted prefactor order is obtained by an exhaustive search.

\begin{figure}[!h]
\centering
\captionsetup[subfigure]{justification=centering}
    \begin{subfigure}[b]{0.48\textwidth}
        \centering
        \includegraphics[width=\textwidth]{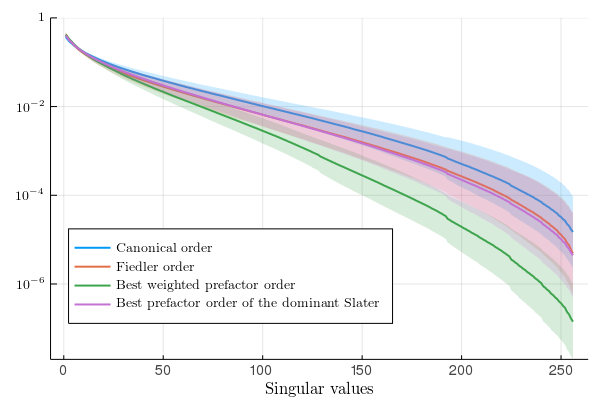}
        \caption{Mean (solid line) and region within one standard deviation (ribbon)}
    \end{subfigure}
        \begin{subfigure}[b]{0.48\textwidth}
            \centering
        \includegraphics[width=\textwidth]{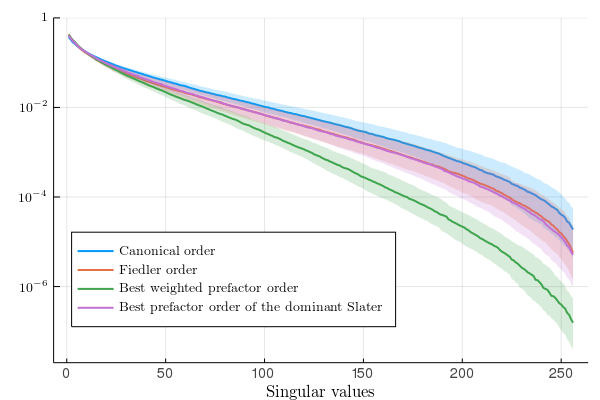}
        \caption{Median (solid line) and 0.25/0.75 quantiles (ribbon)}
    \end{subfigure}
    \caption{Singular values for the weakly correlated state \eqref{weakcorr} for $N=8$ and $L=16$ averaged over 400 simulations}
\label{fig:static-comparison}
\end{figure}

We see that the Fiedler order and the best prefactor order of the dominant Slater give a significant improvement of the canonical order by about an order of magnitude, whereas the best weighted prefactor gives an improvement by about two orders of magnitude.

In Figure~\ref{fig:static-strongly-comparison}, the distribution of the singular values of the following strongly correlated state is plotted:
\begin{equation}\label{corr}
\Psi = \alpha_0 |\psi_1,...,\psi_N\rangle + \alpha_1 |\psi_1,...,\psi_{N-2},\psi_{N+1},\psi_{N+2}\rangle + \alpha_2 |\psi_{N-1},...,\psi_{2N-2}\rangle,
\end{equation}
where $\alpha_0=\sqrt{0.4}$, $\alpha_1=\alpha_2=\sqrt{0.3}$, and the canonical orbitals $(\psi_k)_{1 \leq k \leq 2N-2}$ are obtained by generating a partial isometry $U \in \R^{(2N-2)\times L}$ as previously. 
The considered state is a basic model of a \emph{strongly} correlated state, since all the Slater determiants give roughly the same contribution. We used the same re-shape as before, and again 
averaged the distributions over 400 numerical simulations. 
We observe that when the state is strongly correlated, the canonical, Fiedler or best prefactor orders all yield a comparable decay of the singular values. 
The method we are advocating -- best weighted prefactor -- still improves the decay of the singular values by an order of magnitude, thus outperforming all previous ordering methods. 

\begin{figure}[!h]
\centering
\captionsetup[subfigure]{justification=centering}
    \begin{subfigure}[b]{0.48\textwidth}
        \centering
        \includegraphics[width=\textwidth]{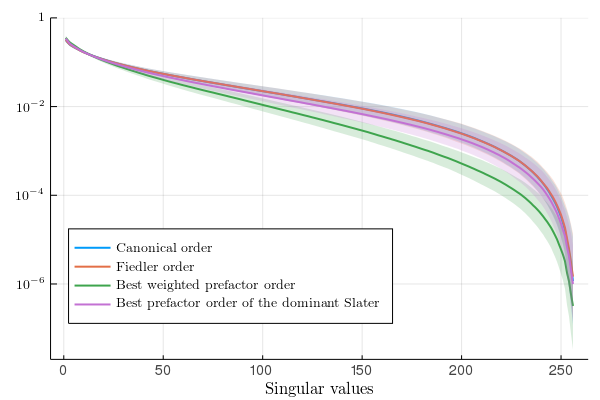}
        \caption{Mean (solid line) and region within one standard deviation (ribbon)}
    \end{subfigure}
        \begin{subfigure}[b]{0.48\textwidth}
            \centering
        \includegraphics[width=\textwidth]{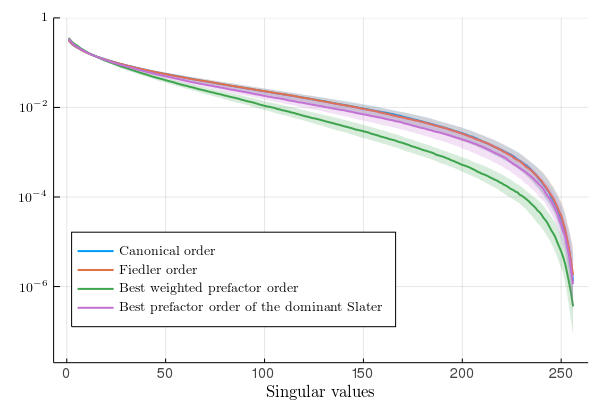}
        \caption{Median (solid line) and 0.25/0.75 quantiles (ribbon)}
    \end{subfigure}
    \caption{Singular values for the strongly correlated state \eqref{corr} for $N=8$ and $L=16$ averaged over 400 simulations}
\label{fig:static-strongly-comparison}
\end{figure}

An important issue beyond the scope of the present work is to investigate the performance of the new ordering scheme within full QC-DMRG simulations of molecular systems. 

\section{Proofs}
\label{sec:proofs}

\subsection{A variant of the Cauchy-Binet formula}

We begin with a small refinement of the Cauchy-Binet formula. This identity will be useful in the following to characterize the singular values of the reshaped tensor of the Slater determinant as well as to compute the one-orbital and two-orbital RDMs. The following proposition can also be found in the Appendix C of \cite{chapman2018classical}.

\begin{prop}
\label{prop:modified_cauchy_binet}
Let $A \in \R^{m \times n}$ and $B \in \R^{n \times m}$. Let $T$ and $U$ be two disjoint subsets of $[n]$ such that $|T|=j$ and $|U|=n-j$. Then 
\begin{equation}
\sum_{S \in \binom{U}{m - j}} \det(A_{[m], S \cup T}) \det(B_{S\cup T, [m]}) = (-1)^{j} \det \begin{pmatrix} \mathbf{0}_{j \times j} & B_{T, [m]} \\
A_{[m], T} & A_{[m], U}B_{U, [m]} \end{pmatrix}  \rm{.}
\end{equation}
\end{prop}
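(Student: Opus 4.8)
The plan is to evaluate the $(j+m)\times(j+m)$ determinant on the right-hand side by a generalized Laplace expansion followed by one application of the classical Cauchy--Binet formula, arranging matters so that all sign bookkeeping is deferred to a single comparison at the very end. (Conceptually, the left-hand side is exactly the Cauchy--Binet sum for $\det(A_{[m],[n]}B_{[n],[m]})$ restricted to those column-subsets of size $m$ that contain $T$, so the proposition asserts a closed form for that restricted sum.) Since $T$ and $U$ are disjoint with $|T|+|U|=n$ they partition $[n]$; for $S=\{s_1<\dots<s_{m-j}\}\subseteq U$ I write $\epsilon(T,S)\in\{\pm1\}$ for the sign of the permutation sorting the tuple $(t_1,\dots,t_j,s_1,\dots,s_{m-j})$ into increasing order, where $T=\{t_1<\dots<t_j\}$. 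This sign will appear twice, from two different reorderings, and the point of the argument is that the two occurrences cancel.

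\textbf{Laplace expansion along the first $j$ rows.} Expand $\det\begin{pmatrix}\mathbf{0}_{j\times j} & B_{T,[m]}\\ A_{[m],T} & A_{[m],U}B_{U,[m]}\end{pmatrix}$ along rows $1,\dots,j$. Because the top-left block is $\mathbf{0}_{j\times j}$, any $j\times j$ minor taken from those rows that uses a column of $\{1,\dots,j\}$ vanishes, so only the column sets $K=j+\mathcal K$ with $\mathcal K\in\binom{[m]}{j}$ contribute; the surviving minor is $\det(B_{T,\mathcal K})$, carrying the Laplace sign $(-1)^{j(j+1)/2+j^2+\sum_{\ell\in\mathcal K}\ell}$. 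The complementary minor uses rows $j+1,\dots,j+m$ and columns $\{1,\dots,j\}\cup(j+\mathcal K^c)$ with $\mathcal K^c=[m]\setminus\mathcal K$, and equals the $m\times m$ determinant $\det\bigl(A_{[m],T}\ \ A_{[m],U}B_{U,\mathcal K^c}\bigr)$.

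\textbf{Cauchy--Binet on the complementary minor.} Next I would factor $\bigl(A_{[m],T}\ \ A_{[m],U}B_{U,\mathcal K^c}\bigr)=\bigl(A_{[m],T}\ \ A_{[m],U}\bigr)\begin{pmatrix}\mathrm{Id}_j & \mathbf{0}\\ \mathbf{0} & B_{U,\mathcal K^c}\end{pmatrix}$ and apply Cauchy--Binet (legitimate since $m\le n$). The identity block $\mathrm{Id}_j$ forces every surviving column set of $\bigl(A_{[m],T}\ A_{[m],U}\bigr)$ to contain the columns indexing $T$, so the remaining freedom is a set $S\in\binom{U}{m-j}$; the two factors of the corresponding term evaluate to $\epsilon(T,S)\det(A_{[m],S\cup T})$ and $\det(B_{S,\mathcal K^c})$. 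Substituting this into the previous step and interchanging the two sums, the right-hand side becomes $\sum_{S\in\binom{U}{m-j}}\epsilon(T,S)\det(A_{[m],S\cup T})\,\Sigma(S)$ with $\Sigma(S)=\sum_{\mathcal K\in\binom{[m]}{j}}(-1)^{j(j+1)/2+j^2+\sum_{\ell\in\mathcal K}\ell}\det(B_{T,\mathcal K})\det(B_{S,\mathcal K^c})$.

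Finally, $\Sigma(S)$ is, up to the overall factor $(-1)^{j^2}=(-1)^j$, precisely the generalized Laplace expansion along the first $j$ rows of the $m\times m$ matrix obtained by stacking $B_{T,[m]}$ over $B_{S,[m]}$; that matrix is a row reordering of $B_{S\cup T,[m]}$, so its determinant is $\epsilon(T,S)\det(B_{S\cup T,[m]})$. Hence $\Sigma(S)=(-1)^j\epsilon(T,S)\det(B_{S\cup T,[m]})$, and since $\epsilon(T,S)^2=1$ the $\epsilon$'s cancel, leaving the right-hand side equal to $(-1)^j\sum_{S}\det(A_{[m],S\cup T})\det(B_{S\cup T,[m]})$, which rearranges to the claim. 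The routine parts are the two classical expansions; the genuine difficulty — and the main obstacle — is the sign bookkeeping: fixing a convention for the generalized Laplace expansion, verifying that the column-reordering sign $\epsilon(T,S)$ produced by Cauchy--Binet and the row-reordering sign arising in the last step are literally the \emph{same} (so they cancel rather than reinforce), and using the parity identity $(-1)^{j^2}=(-1)^j$ to recover exactly the factor $(-1)^j$ in the statement.
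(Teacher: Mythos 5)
Your proof is correct, but it takes a genuinely different route from the paper's. You evaluate the $(j+m)\times(j+m)$ determinant by a generalized Laplace expansion along the first $j$ rows, apply the classical Cauchy--Binet formula to each complementary minor after factoring $\bigl(A_{[m],T}\ A_{[m],U}B_{U,\mathcal{K}^c}\bigr)=\bigl(A_{[m],T}\ A_{[m],U}\bigr)\operatorname{diag}(\mathrm{Id}_j,B_{U,\mathcal{K}^c})$, and then recognize the inner sum over $\mathcal{K}$ as a Laplace expansion run in reverse, reassembling $\det(B_{S\cup T,[m]})$. This requires tracking three separate signs (two Laplace signs and the Cauchy--Binet column-reordering sign $\epsilon(T,S)$) and checking they cancel. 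The paper instead writes the block matrix directly as a product $\mathcal{A}\mathcal{B}$ with
$\mathcal{A}=\begin{pmatrix}\mathrm{Id}_{j}&\mathbf{0}&\mathbf{0}\\ \mathbf{0}&A_{[m],T}&A_{[m],U}\end{pmatrix}$,
$\mathcal{B}=\begin{pmatrix}\mathbf{0}&B_{T,[m]}\\ \mathrm{Id}_{j}&\mathbf{0}\\ \mathbf{0}&B_{U,[m]}\end{pmatrix}$,
and applies Cauchy--Binet once to $\det(\mathcal{A}\mathcal{B})$; the identity blocks in $\mathcal{A}$ and $\mathcal{B}$ automatically force the surviving column sets to contain $[2j]$, which is exactly the constraint that $T\subset S\cup T$, and the factor $(-1)^j$ emerges in one step from a block-column swap. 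Both routes are valid; the paper's factorization buys you a much shorter argument with essentially no sign bookkeeping, while your route is more elementary in the sense that it only invokes the textbook Cauchy--Binet and Laplace expansions without needing to spot the factorization trick. Your discussion of the sign cancellation is accurate: both occurrences of $\epsilon(T,S)$ are the sign of the permutation sorting $(t_1,\dots,t_j,s_1,\dots,s_{m-j})$ into increasing order (once applied to columns of $A$, once to rows of $B$), so they coincide and square to $1$.
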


\begin{proof}
The proof of this Proposition relies on the following observation
\begin{equation}
   \begin{pmatrix} \mathbf{0}_{j \times j} & B_{T, [m]} \\
A_{[m], T} & A_{[m], U}B_{U, [m]} \end{pmatrix}  = \begin{pmatrix} \mathrm{Id}_{j\times j} & \mathbf{0}_{j \times j} & \mathbf{0}_{j \times n-j} \\
\mathbf{0}_{m \times j} & A_{[m],T} & A_{[m],U}
\end{pmatrix}
\begin{pmatrix}
\mathbf{0}_{j \times j} & B_{T,[m]} \\
\mathrm{Id}_{j\times j} & \mathbf{0}_{j \times m} \\
\mathbf{0}_{n-j \times j} & B_{U,[m]}
\end{pmatrix} =: \mathcal{A} \mathcal{B}.
\end{equation}
By the Cauchy-Binet formula \cite{horn2013matrix}, we have
\begin{equation}
    \label{eq:calA-calB_cauchy-binet}
    \det\left( \mathcal{A}\mathcal{B} \right) = \sum\limits_{s \in \binom{[n+j]}{m+j}} \det \left( \mathcal{A}_{[m+j],s} \right) \det \left( \mathcal{B}_{s,[m+j]} \right).
\end{equation}
Let $s \in \binom{[n+j]}{m+j}$. If $[j] \not\subset s$, then the columns of $\mathcal{A}_{[m+j],s}$ are linearly dependent, so $\det \left( \mathcal{A}_{[m+j],s} \right)=0$. 
Similarly if $j+[j] \not\subset s$, then the rows of $\mathcal{B}_{s,[m+j]}$ are linearly dependent, so $\det \left( \mathcal{B}_{s,[m+j]} \right)=0$.
Hence, for nonzero terms in \eqref{eq:calA-calB_cauchy-binet}, $s$ is of the form $s = [2j] \cup \tau $ where $\tau \in 2j+\binom{[n-j]}{m-j}$.
For such an $(m+j)$-combination $s$, there is some $S \in \binom{U}{m-j}$ such that
\begin{equation}
    \det \left( \mathcal{A}_{[m+j],s} \right) = \det( A_{[m],S \cup T} ), \quad \text{and} \quad \det \left( \mathcal{B}_{s,[m+j]} \right) = (-1)^j \det( B_{S \cup T,[m]} ),
\end{equation}
which concludes the proof.
\end{proof}

Notice that for $j=0$, we recover the usual Cauchy-Binet formula.

\subsection{Proof of Theorem~\ref{thm:sing_values_TT_of_slater}}

Since $\Psi$ is a wave function of an $N$-electron Slater determinant, without loss of generality the matrix $\Psi^{\mu_1,\dots,\mu_{k}}_{\mu_{k+1},\dots,\mu_L}$ can be reordered up to permutations of its columns and rows  in a block diagonal form
\begin{equation}
    \label{eq:full_tensor_reshape}
    \Psi^{\mu_1,\dots,\mu_{k}}_{\mu_{k+1},\dots,\mu_L}  = \begin{blockarray}{ccccc}
    & \underbrace{0 \cdots 1}_{N \text{ occurences of } 1} & \underbrace{0 \cdots 1}_{N-1 \text{ occurences of } 1} & \cdots & \underbrace{0\cdots0}_{0 \text{ occurence of } 1} \\
    \begin{block}{c(cccc)}
    \underbrace{0 \cdots 0}_{0 \text{ occurence of } 1} & C_0 & 0 & \cdots & 0 \\
    \underbrace{0 \cdots 1}_{1 \text{ occurence of } 1} & 0 & C_1 & \dots & 0 \\
    \vdots & \vdots & & \ddots & \vdots \\
    \underbrace{0 \cdots 1}_{N \text{ occurences of } 1} & 0 & \cdots & \cdots & C_N \\
    \end{block}
\end{blockarray}.
\end{equation}
The submatrices $C_j$ for ${0 \leq j \leq N}$ have dimensions $\binom{k}{j} \times \binom{L-k}{N-j}$.

%

\begin{lem}
\label{lem:2e_formule}
Let $C_{j}$ be the matrix defined in Equation~\eqref{eq:full_tensor_reshape}. If $k \leq L-N$, for all $\sigma,\tau \in \binom{[k]}{j}$, we have
\[
(C_{j} C_{j}^T)_{\sigma,\tau} = \det(W_kW_k^T) \det \left[ \left(V_k^T (W_kW_k^T)^{-1}V_k \right)^\tau_\sigma \right],
\]
where $\left(V_k^T (W_kW_k^T)^{-1}V_k \right)^\tau_\sigma$ is the submatrix of rows $\lbrace \sigma(1), \dots, \sigma(j) \rbrace$ and columns $\lbrace \tau(1), \dots, \tau(j) \rbrace$.

This expression can be written in a matrix form
\[
C_{j} C_{j}^T = \det (W_k W_k^T)\, \Lambda^j\left(V_k^T (W_kW_k^T)^{-1}V_k \right),
\]
where $\Lambda^j A$ denotes the $j$-th compound matrix of a matrix $A$, \emph{i.e.} the matrix of minors of order $j$ of $A$ in lexicographical order.

Similarly, if $k \geq N$, for all $\sigma,\tau \in \binom{[L-k]}{N-j}$ we have
\[
(C_{j}^T C_{j})_{\sigma,\tau} = \det(V_kV_k^T) \det \left[ \left(W_k^T (V_kV_k^T)^{-1}W_k \right)^\tau_\sigma \right],
\]
so that in matrix notation
\[
C_{j}^T C_{j} = \det (V_k V_k^T)\, \Lambda^{N-j}\left(W_k^T (V_kV_k^T)^{-1}W_k \right).
\]
\end{lem}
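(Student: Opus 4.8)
The plan is to compute $(C_jC_j^T)_{\sigma,\tau}$ entry by entry and recognize the answer through the modified Cauchy--Binet identity of Proposition~\ref{prop:modified_cauchy_binet}. First I would record the entries of $C_j$: reading off \eqref{slater_coefficient} and the block structure \eqref{eq:full_tensor_reshape}, for $\sigma\in\binom{[k]}{j}$ and $\rho\in\binom{[L-k]}{N-j}$ one has $(C_j)_{\sigma,\rho}=\det\bigl((V_k)_\sigma\mid(W_k)_\rho\bigr)$, the $N\times N$ determinant of the columns of $V_k$ indexed by $\sigma$ together with the columns of $W_k$ indexed by $\rho$ (the correct ordering is automatic, since every $\sigma$-index precedes every $(k+\rho)$-index). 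Consequently $(C_jC_j^T)_{\sigma,\tau}=\sum_{\rho}\det\bigl((V_k)_\sigma\mid(W_k)_\rho\bigr)\det\bigl((V_k)_\tau\mid(W_k)_\rho\bigr)$, a sum over $(N-j)$-subsets $\rho$ of $[L-k]$.

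Next I would match this sum to the left-hand side of Proposition~\ref{prop:modified_cauchy_binet}. Take $\mathcal A=\bigl((V_k)_\sigma\mid W_k\bigr)\in\R^{N\times(j+L-k)}$ and $\mathcal B=\begin{pmatrix}(V_k)_\tau^T\\ W_k^T\end{pmatrix}\in\R^{(j+L-k)\times N}$, let $T=\{1,\dots,j\}$ be the first block of columns (resp.\ rows) and $U=\{j+1,\dots,j+L-k\}$ the second, so $|T|=j$ and $m-|T|=N-j\le L-k=|U|$ precisely because $k\le L-N$. Then the sum over $S\in\binom{U}{N-j}$ in Proposition~\ref{prop:modified_cauchy_binet} is exactly the $\rho$-sum above, because $\det(\mathcal A_{[N],S\cup T})=(C_j)_{\sigma,\rho}$ and $\det(\mathcal B_{S\cup T,[N]})=(C_j)_{\tau,\rho}$ (the latter since the relevant submatrix of $\mathcal B$ is the transpose of $\bigl((V_k)_\tau\mid(W_k)_\rho\bigr)$, and $\det$ is transpose-invariant). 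Hence Proposition~\ref{prop:modified_cauchy_binet} yields
\[
(C_jC_j^T)_{\sigma,\tau}=(-1)^j\det\begin{pmatrix}\mathbf{0}_{j\times j} & (V_k)_\tau^T\\ (V_k)_\sigma & W_kW_k^T\end{pmatrix}.
\]

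Then I would evaluate this block determinant by the Schur-complement formula with respect to the $(2,2)$-block $W_kW_k^T$, which is invertible because $W_k$ has full rank (Assumption~\ref{assumption:invertibility}) and $k\le L-N$ forces $L-k\ge N$. This gives $(-1)^j\det(W_kW_k^T)\det\bigl(-(V_k)_\tau^T(W_kW_k^T)^{-1}(V_k)_\sigma\bigr)$; pulling the scalar $-1$ out of the $j\times j$ determinant produces a further factor $(-1)^j$, so the two signs cancel and
\[
(C_jC_j^T)_{\sigma,\tau}=\det(W_kW_k^T)\,\det\Bigl[(V_k)_\tau^T(W_kW_k^T)^{-1}(V_k)_\sigma\Bigr].
\]
Since $(V_k)_\tau^T(W_kW_k^T)^{-1}(V_k)_\sigma$ is the $j\times j$ submatrix of $V_k^T(W_kW_k^T)^{-1}V_k$ picking rows $\tau$ and columns $\sigma$, this is the claimed entrywise identity, and collecting over all $\sigma,\tau\in\binom{[k]}{j}$ gives the compound-matrix form $C_jC_j^T=\det(W_kW_k^T)\,\Lambda^j\bigl(V_k^T(W_kW_k^T)^{-1}V_k\bigr)$. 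The statement for $C_j^TC_j$ when $k\ge N$ follows from the same argument with the roles of $V_k$ and $W_k$ (and of the rows and columns of $C_j$) interchanged: one now sums over the $V_k$-block index $\sigma\in\binom{[k]}{j}$, keeps the $W_k$-columns $\rho,\rho'$ fixed as the set $T$ of size $N-j$, and uses invertibility of $V_kV_k^T$ (valid since $k\ge N$); the block-transposition signs $(-1)^{j(N-j)}$ that now appear when reading $\det(\mathcal A_{[N],S\cup T})$ and $\det(\mathcal B_{S\cup T,[N]})$ are identical and cancel.

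The only genuinely delicate point is the sign bookkeeping: one must verify that the $(-1)^j$ (resp.\ $(-1)^{N-j}$) supplied by Proposition~\ref{prop:modified_cauchy_binet}, the sign incurred when extracting $-1$ from the $j\times j$ (resp.\ $(N-j)\times(N-j)$) Schur complement, and any block-swap signs hidden in the ordering of $\mathcal A$ and $\mathcal B$ all cancel in pairs — which they do — and that the correspondence between the summation index $\rho$ (resp.\ $\sigma$) and Cauchy--Binet's subset $S$ is order-preserving, so that no stray permutation sign enters the individual $N\times N$ determinants. Everything else is routine linear algebra.
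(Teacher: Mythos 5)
Your proof is correct and follows essentially the same route as the paper's: identify the entries of $C_j$ as minors of $U$, feed the resulting sum $\sum_\rho \det(u_\sigma\,u_{k+\rho})\det(u_\tau\,u_{k+\rho})$ into Proposition~\ref{prop:modified_cauchy_binet} (via the auxiliary matrices $\mathcal A$, $\mathcal B$, which the paper leaves implicit), and then evaluate the bordered determinant by a Schur complement with respect to $W_kW_k^T$; the paper's proof does exactly this, just more tersely, and handles $C_j^TC_j$ by the same mirroring argument. Your sign bookkeeping (the $(-1)^j$ from the proposition cancelling the $(-1)^j$ from extracting $-1$ out of the Schur complement, and the squared block-swap sign $(-1)^{j(N-j)}$ in the second case) is accurate.
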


The proof of this lemma relies on Proposition~\ref{prop:modified_cauchy_binet}.

\begin{proof}[Proof of Lemma~\ref{lem:2e_formule}]
Let $\sigma,\tau \in \binom{[k]}{j}$. By the definition of $C_{j}$, the coefficient $(C_{j}C_{j}^T)_{\sigma,\tau}$ is given by
\[
(C_{j}C_{j}^T)_{\sigma,\tau} = \sumlim{\rho \in k+\binom{[L-k]}{N-j}}{} \det( u_\sigma u_\rho ) \det( u_\tau u_\rho ).
\]
Using Proposition~\ref{prop:modified_cauchy_binet} and Assumption~\ref{assumption:invertibility}, we have
\begin{align*}
 (C_{j}C_{j}^T)_{\sigma,\tau} & = (-1)^{j} \det \begin{pmatrix}
    \mathbf{0}_{j \times j}  &  u_\tau^T  \\
      u_\sigma & W_kW_k^T \\
 \end{pmatrix} \\  
 & = (-1)^{j} \det \left(W_kW_k^T \right) \det \left( -u_\tau^T(W_kW_k^T)^{-1}u_\sigma \right) \\
 & = \det \left(W_kW_k^T \right) \det \left[ \left( V_k^T(W_kW_k^T)^{-1}V_k \right)^\tau_\sigma \right].
\end{align*}

The second part of the lemma is proved the same way.
\end{proof}

We now have all the ingredients to prove Theorem~\ref{thm:sing_values_TT_of_slater}.

\begin{proof}[Proof of Theorem~\ref{thm:sing_values_TT_of_slater}]
We prove a result slightly stronger than the statement of Theorem~\ref{thm:sing_values_TT_of_slater}. Namely, we will show that the positive singular values $(\alpha)_i$ of $C_j$ and $(\beta)_i$ of $C_{\min(k,N,L-k)}$ satisfy 
\[
    \forall \, 1 \leq j \leq \min \big( \tbinom{k}{j}, \tbinom{L-k}{N-j} \big) , \ {\alpha_i}^2 {\beta_{d-j}}^2 = 
    \begin{cases} 
    \det(V_k^T V_k) \det(W_kW_k^T), & \text{ if } k \leq N \\
    \det(V_k V_k^T) \det(W_kW_k^T), & \text{ if } N \leq k \leq L-k \\
    \det(V_k V_k^T) \det(W_k^TW_k), & \text{ if } L-k \leq k \leq L-1 
    \end{cases}
\]

The proof of Theorem~\ref{thm:sing_values_TT_of_slater} is divided into three cases.

\paragraph{\textbf{Case 1} $k \leq N$:}

By Lemma~\ref{lem:2e_formule}, we know for $0 \leq j \leq k$ that
\begin{align*}
    C_{k-j} C_{k-j}^T & = \det (W_k W_k^T)\, \Lambda^{k-j}\left(V_k^T (W_kW_k^T)^{-1}V_k \right), \\
    C_{j} C_{j}^T & = \det (W_k W_k^T)\, \Lambda^{j}\left(V_k^T (W_kW_k^T)^{-1}V_k \right).
\end{align*}
The eigenvalues of $\Lambda^{j}\left(V_k^T (W_kW_k^T)^{-1}V_k \right)$ are the products of $j$ distinct eigenvalues (counted with their multiplicities) of the matrix  $\Lambda^{j}\left(V_k^T (W_kW_k^T)^{-1}V_k \right)$ \cite[Theorem~6.18]{fiedler1986special}.
Let $V_k=A\Sigma B^T$ be the singular value decomposition of $V_k$ where $A \in \R^{N\times N}$ and $B \in \R^{k \times k}$ are orthogonal matrices and $\Sigma \in \R^{N \times k}$ is a diagonal matrix with the singular values of $V_k$. 
By Assumption~\ref{assumption:invertibility}, $V_k$ and $W_k$ are full rank matrices. 
Hence the singular values $(s_i)_{1 \leq i \leq k}$ of $V_k$ satisfy $0 < s_i <1$.
Thus $V_k^T(W_kW_k^T)^{-1}V_k = B^T\Sigma (I_N - \Sigma\Sigma^T)^{-1} \Sigma B$. 
Since $\binom{[k]}{j}$ and $\binom{[k]}{k-j}$ are in one-to-one correspondence, to an eigenvalue $\lambda$ of $\Lambda^{j}\left(V_k^T (W_kW_k^T)^{-1}V_k \right)$ there corresponds exactly one eigenvalue $\mu$ of $\Lambda^{k-j}\left(V_k^T (W_kW_k^T)^{-1}V_k \right)$ such that $\lambda \mu = \det(\Sigma (I_N - \Sigma\Sigma^T)^{-1} \Sigma^T) = \det(V_k^TV_k) \det(W_kW_k^T)^{-1}$. The result now follows.

\paragraph{\textbf{Case 2} $k \geq L-N$:}

We repeat the proof by considering $C_{j}^T C_{j}$ and $C_{k-j}^T C_{k-j}$ instead of $C_{j} C_{j}^T$ and $C_{k-j} C_{k-j}^T$.

\paragraph{\textbf{Case 3} $N \leq k \leq L-N$:}

By Lemma~\ref{lem:2e_formule}, we have for $0 \leq j \leq N$ 
\begin{align*}
    C_{N-j} C_{N-j}^T & = \det (W_k W_k^T)\, \Lambda^{N-j}\left(V_k^T (W_kW_k^T)^{-1}V_k \right), \\
    C_{j}^T C_{j} & = \det (V_k V_k^T)\, \Lambda^{N-j}\left(W_k^T (V_kV_k^T)^{-1}W_k \right).
\end{align*}
Using the singular values $(s_i)_{1 \leq i \leq N}$ of $V_k$, the nonzero eigenvalues of $V_k^T (W_kW_k^T)^{-1}V_k$ are $(\frac{s_i^2}{1-s_i^2})_{1 \leq i \leq N}$ and those of $W_k^T (V_kV_k^T)^{-1}W_k$ are $(\frac{1-s_i^2}{s_i^2})_{1 \leq i \leq N}$. By a bijection argument, a nonzero eigenvalue of $V_k^T (W_kW_k^T)^{-1}V_k$ corresponds exactly to the inverse of one eigenvalue of $W_k^T (V_kV_k^T)^{-1}W_k$. 
\end{proof}

\subsection{Proof of Theorem~\ref{cor:strongly_correlated_hosv}}

The proof of Theorem~\ref{cor:strongly_correlated_hosv} is a consequence of the following generalized Weyl inequality on the eigenvalues of the sum of symmetric matrices.

\begin{prop}[Generalized Weyl inequality]
Let $B = \sum\limits_{k=1}^L A_k$ where $A_k \in \mathbb{R}^{n \times n}$ is symmetric. Let $(\lambda_i^{(B)})_{1 \leq i \leq n}$ and $(\lambda_i^{(A_k)})_{1 \leq i \leq n}$ be respectively the eigenvalues in decreasing order of the matrices $B$ and $A_k$, $k=1,\dots,L$. Then we have for all $i,i_1,\dots i_{L-1}$ such that $0 \leq i_k \leq n-1$ and $\sum\limits_{k=1}^{L-1} i_k \leq i+1$
\begin{equation}
    \lambda_{i}^{(B)} \leq \lambda_{i_1+1}^{(A_1)} + \cdots + \lambda_{i_{L-1}+1}^{(A_{L-1})} + \lambda_{i-\scriptscriptstyle{\sum\limits_{\scriptscriptstyle{k=1}}^{\scriptscriptstyle{L-1}}i_{\scriptscriptstyle{k}}}}^{(A_L)}.
\end{equation}
\end{prop}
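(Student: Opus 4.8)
The statement is the $L$-term generalization of Weyl's inequality for the eigenvalues of a sum of symmetric matrices, and the plan is to obtain it in one stroke from the Courant--Fischer min--max principle (equivalently, by induction on $L$ from the familiar two-term case $\lambda_{p+q-1}^{(A+C)}\le\lambda_p^{(A)}+\lambda_q^{(C)}$). The only input is the variational characterization $\lambda_m^{(M)}=\min_{\dim V = n-m+1}\ \max_{0\neq x\in V}\ \langle Mx,x\rangle/\langle x,x\rangle$ valid for every symmetric $M\in\R^{n\times n}$; the rest is a subspace-intersection count.

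Concretely, I would set $j_k:=i_k+1$ for $k=1,\dots,L-1$ and $j_L:=i-\sum_{k=1}^{L-1}i_k$, so that by construction $\sum_{k=1}^{L}j_k=i+L-1$, and the index hypotheses force $1\le j_k\le n$ for every $k$. For each $k$, choose a subspace $V_k\subset\R^n$ of dimension $n-j_k+1$ realizing the outer minimum in the min--max formula for $A_k$ at level $j_k$, i.e.\ such that $\langle A_kx,x\rangle\le\lambda_{j_k}^{(A_k)}\langle x,x\rangle$ for all $x\in V_k$. Let $W:=\bigcap_{k=1}^{L}V_k$. The elementary dimension estimate $\dim W\ge n-\sum_{k=1}^{L}(j_k-1)=n-i+1$ then holds. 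Since the Rayleigh quotient of $B=\sum_kA_k$ is the sum of those of the $A_k$, every $x\in W$ satisfies $\langle Bx,x\rangle\le\big(\sum_{k=1}^{L}\lambda_{j_k}^{(A_k)}\big)\langle x,x\rangle$; picking any $(n-i+1)$-dimensional subspace inside $W$ and applying the min--max formula for $B$ at level $i$ gives $\lambda_i^{(B)}\le\sum_{k=1}^{L}\lambda_{j_k}^{(A_k)}$, which is precisely the asserted inequality once $j_L$ is written out in terms of $i$ and the $i_k$.

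Alternatively the same result follows by induction on $L$: write $B=(A_1+\dots+A_{L-1})+A_L$, apply the two-term Weyl inequality (proved by the same intersection-of-subspaces trick) with indices $p=\big(\sum_{k=1}^{L-1}i_k\big)+1$ and $q=i-\sum_{k=1}^{L-1}i_k$, and then invoke the induction hypothesis on the first block, for which the residual index on $A_{L-1}$ telescopes to $i_{L-1}+1$. Either route is analytically trivial; the one step that genuinely requires care is the bookkeeping of eigenvalue indices --- checking that each $j_k$ lies in $\{1,\dots,n\}$, that the dimension bound $\dim W\ge n-i+1$ is actually available (equivalently $i\le n$, so that $\sum_k j_k-(L-1)\le n$), and that the residual index $i-\sum_{k=1}^{L-1}i_k$ is a legitimate eigenvalue index. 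This index accounting is where I expect any subtlety to reside, and it is worth stating the admissible range of $\sum_{k=1}^{L-1}i_k$ carefully so that it is consistent with the two-term inequality being invoked.
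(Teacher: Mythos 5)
Your proof is correct and uses essentially the same subspace-intersection argument as the paper: the paper intersects the $L$ eigenvector-tail subspaces of the $A_k$ together with the span of the top $i$ eigenvectors of $B$ and extracts a unit vector by the dimension-counting lemma, whereas you intersect only the $L$ tail subspaces and then invoke Courant--Fischer for $\lambda_i^{(B)}$; these are the same idea differently packaged. Your instinct to flag the index bookkeeping is well placed, but be careful with one claim: as the proposition is stated, the hypothesis $\sum_{k=1}^{L-1}i_k\le i+1$ does \emph{not} force $j_L=i-\sum_{k=1}^{L-1}i_k\ge 1$ (it only gives $j_L\ge -1$), so your assertion that ``the index hypotheses force $1\le j_k\le n$ for every $k$'' is not quite right; the admissible range should in fact be $\sum_{k=1}^{L-1}i_k\le i-1$ for the residual index to be a legitimate eigenvalue label, and it is the statement, not your argument, that ought to be tightened.
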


\begin{proof}
The proof relies on the following lemma \cite[Lemma 4.2.3]{horn2013matrix}: if $S_1,\dots,S_k$ are subspaces of $\mathbb{R}^n$ such that $\sum\limits_{j=1}^k \dim(S_j) \geq (k-1)n+1$, then $S_1 \cap \cdots \cap S_k$ contains a unit vector.

Let $(x_j^{(B)})_{1 \leq j \leq n}$ and $(x_j^{(A_k)})_{1 \leq j \leq n}$ be respectively eigenvectors associated to $(\lambda_j^{(B)})_{1 \leq j \leq n}$ and $(\lambda_j^{(A_k)})_{1 \leq j \leq n}$. Define 
\begin{align*}
    S^{(B)} & = \mathrm{Span}\left( x_1^{(B)}, \dots, x_i^{(B)} \right), \\
    S^{(A_k)} & = \mathrm{Span}\left( x_{i_k+1}^{(A_k)},\dots,x_n^{(A_k)} \right), \text{ for } k=1,\dots,L-1 \\
    S^{(A_L)} & = \mathrm{Span}\left( x_{i - \sum\limits_{k=1}^{L-1}i_k}^{(A_L)},\dots,x_n^{(A_L)} \right).
\end{align*}
Then by construction, $\dim \left(S^{(B)}\right) + \sum\limits_{k=1}^{L} \left(S^{(A_k)}\right) \geq Ld+1$. Hence the intersection of these subspaces has a unit vector. Denote this unit vector by $x_*$. Since the eigenvalues are in decreasing order, we have
\begin{equation}
    \lambda_i^{(B)} \leq x_*^T B x_* \leq \lambda_{i_1+1}^{(A_1)} + \cdots + \lambda_{i_{L-1}+1}^{(A_{L-1})} + \lambda_{i-\sum\limits_{k=1}^{L-1}i_k}^{(A_L)}.
\end{equation}
\end{proof}

\begin{proof}[Proof of Theorem~\ref{cor:strongly_correlated_hosv}]
For a given matrix $A \in \mathbb{R}^{m \times n}$, the singular values of $A$ are the eigenvalues of the matrix $\begin{pmatrix} 0 & A \\ A^T & 0 \end{pmatrix}$.
Hence the Weyl inequality on the eigenvalues of the sum of symmetric matrices can be extended to singular values of sums of matrices. 
\end{proof}

\subsection{Proof of Proposition~\ref{prop:orbital_density_matrix}}
\label{subsec:proof-prop-RDM-Slater}

\begin{proof}[Proof of Proposition~\ref{prop:orbital_density_matrix}]
We begin with the one-orbital density matrix. Note that the off-diagonal terms of the one-orbital density matrix are equal to 0 because the Slater determinant is an $N$-body state. 
By definition, the unoccupied-unoccupied entry of the one-orbital RDM is
\[
\rho_i^{(1)}(0,0) = \sum\limits_{\substack{\mu_k \in \lbrace 0,1 \rbrace \\ \mu_i = 0}} |C_{\mu_1,\dots,\mu_L}|^2 = \sum\limits_{\substack{1 \leq i_1 < \cdots < i_N \leq L \\ i_k \not= i }} |\det(u_{i_1} \cdots u_{i_N})|^2,
\]
where we used formula \eqref{slater_coefficient}. Using the Cauchy-Binet formula for the matrix $U_{-i} = (u_1 \cdots u_{i-1} u_{i+1} \cdots u_L)$, we have
\[
\rho_i^{(1)}(0,0) = \det(U_{-i}U_{-i}^T) = \det(\mathrm{Id}_N - u_{i}u_{i}^T) = 1-\|u_i\|^2.
\]
Using that the Slater determinant $\Psi$ is normalized, we have $\rho^{(1)}_i(1,1) = 1-\rho^{(1)}_i(0,0) = \|u_i\|^2$.

For the two-orbital RDM, the computations are similar. We will give the details for the first diagonal and the off-diagonal entries. 
We have
\begin{equation*}
    \rho^{(2)}_{i,j}(00,00) = \sum\limits_{\substack{\mu_k \in \lbrace 0,1 \rbrace \\ \mu_i = \mu_j = 0}} |C_{\mu_1,\dots,\mu_L}|^2 = \sum\limits_{\substack{1 \leq i_1 < \cdots < i_N \leq L \\ i_k \not= i,j }} |\det(u_{i_1} \cdots u_{i_N})|^2.
\end{equation*}
Using the Cauchy-Binet formula for the matrix 
$U_{-i,-j} = \left( u_1 \cdots u_{i-1} u_{i+1} \cdots u_{j-1} u_{j+1} \cdots u_L \right)$, 
we get
\begin{align*}
   \rho^{(2)}_{i,j}(00,00) &= \det \left( \mathrm{Id}_N - (u_i u_j)\begin{pmatrix}
   u_i^T  \\ u_j^T
   \end{pmatrix} \right) \\
   &= \det \begin{pmatrix}
   1 - \|u_i\|^2 & u_i^T u_j \\
   u_j^Tu_i & 1 - \|u_j\|^2
   \end{pmatrix}
   = 1- \|u_i\|^2- \|u_j\|^2+G,
\end{align*}
where $G = \|u_i\|^2 \|u_j\|^2 -(u_i^Tu_j)^2$.

The off-diagonal term $\rho^{(2)}_{i,j}(10,01)$ is given by
\begin{align*}
\rho^{(2)}_{i,j}(10,01) &= \sum\limits_{\mu_k \in \lbrace 0,1 \rbrace} C_{\mu_1,\dots,\mu_{i-1},1,\mu_{i+1},\dots,\mu_{j-1},0,\mu_{j+1},\dots,\mu_L} C_{\mu_1,\dots,\mu_{i-1},0,\mu_{i+1},\dots,\mu_{j-1},1,\mu_{j+1},\dots,\mu_L} \\
   &= \sum\limits_{\substack{1 \leq i_1 < \cdots < i_N \leq L \\ i_k \not= i,j }} \det(u_{i_1} \cdots u_i \cdots u_{i_N}) \det(u_{i_1} \cdots u_j \cdots u_{i_N}) \\
   &= \sum\limits_{\alpha \in \binom{[L]\setminus \lbrace i,j \rbrace}{N-1}} \det(U_{\alpha \cup i}) \det(U_{\alpha \cup j}).
\end{align*}
We want to apply Proposition~\ref{prop:modified_cauchy_binet}. Hence we partition the $(N-1)$-combination $\alpha$ into $\alpha = \gamma \cup \beta$, where $\gamma \in \binom{[i+1:j-1]}{k}$ and $\beta \in \binom{[i-1] \cup [j+1:L]}{N-1-k}$.
Hence we obtain
\begin{align*}
\rho^{(2)}_{i,j}(10,01) &= \sum\limits_{k=0}^{j-i-1} \sum\limits_{\gamma \in \binom{[i+1:j-1]}{k}} \sum\limits_{\beta \in \binom{[i-1] \cup [j+1:L]}{N-1-k}} \det(U_{\beta \cup i \cup \gamma}) \det(U_{\beta \cup \gamma \cup j}) 
\end{align*}
For $0 \leq k \leq j-i-1$ and $\gamma \in \binom{[i+1:j-1]}{k}$, by Proposition~\ref{prop:modified_cauchy_binet} we have
\[
\sum\limits_{\beta \in \binom{[i-1] \cup [j+1:L]}{N-1-k}} \det(U_{\beta \cup i \cup \gamma}) \det(U_{\beta \cup \gamma \cup j}) = (-1)^{k+1} \det \begin{pmatrix} \mathbf{0}_{k+1 \times k+1} & U_{\gamma \cup j}^T \\
U_{i \cup \gamma} & U_{-[i,j]} U_{-[i,j]}^T  \end{pmatrix},
\]
where $U_{-[i,j]} = (u_1 \cdots u_{i-1} u_{j+1} \cdots u_L)$. Since $UU^T = \mathrm{Id}_N$, we have
\[
U_{-[i,j]} U_{-[i,j]}^T = \mathrm{Id}_N - U_{[i,j]}U_{[i,j]}^T.
\]
Using the alternating property of the determinant we obtain the asserted result. 
\end{proof}
\vspace*{-3mm}

\begin{small}
\begin{bibdiv}
\begin{biblist}

\bib{barcza2011quantum}{article}{
      author={Barcza, Gergely},
      author={Legeza, {\"O}rs},
      author={Marti, Konrad~H},
      author={Reiher, Markus},
       title={Quantum-information analysis of electronic states of different
  molecular structures},
        date={2011},
     journal={Physical Review A},
      volume={83},
      number={1},
       pages={012508},
}

\bib{chan2002highly}{article}{
      author={Chan, Garnet Kin-Lic},
      author={Head-Gordon, Martin},
       title={Highly correlated calculations with a polynomial cost algorithm:
  A study of the density matrix renormalization group},
        date={2002},
     journal={The Journal of Chemical Physics},
      volume={116},
      number={11},
       pages={4462\ndash 4476},
}

\bib{carlen2014remainder}{article}{
      author={Carlen, Eric~A.},
      author={Lieb, Elliott~H.},
       title={Remainder terms for some quantum entropy inequalities},
        date={2014},
     journal={J. Math. Phys.},
      volume={55},
      number={4},
       pages={042201, 5},
}

\bib{chapman2018classical}{article}{
      author={Chapman, Adrian},
      author={Miyake, Akimasa},
       title={Classical simulation of quantum circuits by dynamical
  localization: Analytic results for {P}auli-observable scrambling in
  time-dependent disorder},
        date={2018Jul},
     journal={Phys. Rev. A},
      volume={98},
       pages={012309},
}

\bib{dukelsky1998equivalence}{article}{
      author={{Dukelsky, J.}},
      author={{Mart\'{\i}n-Delgado, M. A.}},
      author={{Nishino, T.}},
      author={{Sierra, G.}},
       title={Equivalence of the variational matrix product method and the
  density matrix renormalization group applied to spin chains},
        date={1998},
     journal={Europhys. Lett.},
      volume={43},
      number={4},
       pages={457\ndash 462},
}

\bib{fiedler1986special}{book}{
      author={Fiedler, Miroslav},
       title={Special matrices and their applications in numerical
  mathematics},
   publisher={Martinus Nijhoff Publishers, Dordrecht},
        date={1986},
        note={Translated from the Czech by Petr P\v{r}ikryl and Karel Segeth},
}

\bib{friesecke2003mcscf}{article}{
      author={Friesecke, G.},
       title={The multiconfiguration equations for atoms and molecules: charge
  quantization and existence of solutions},
        date={2003},
     journal={Arch. Ration. Mech. Anal.},
      volume={169},
      number={1},
       pages={35\ndash 71},
}

\bib{griebel2019analysis}{article}{
      author={{Griebel}, Michael},
      author={{Harbrecht}, Helmut},
       title={{Analysis of tensor approximation schemes for continuous
  functions}},
        date={2019Mar},
     journal={arXiv:1903.04234},
}

\bib{grasedyck2009hierarchical}{article}{
      author={Grasedyck, Lars},
       title={Hierarchical singular value decomposition of tensors},
        date={2009/10},
     journal={SIAM J. Matrix Anal. Appl.},
      volume={31},
      number={4},
       pages={2029\ndash 2054},
}

\bib{hackbusch2012tensor}{book}{
      author={Hackbusch, Wolfgang},
       title={Tensor spaces and numerical tensor calculus},
      series={Springer Series in Computational Mathematics},
   publisher={Springer, Heidelberg},
        date={2012},
      volume={42},
}

\bib{hackbush2014numerical}{article}{
      author={Hackbusch, Wolfgang},
       title={Numerical tensor calculus},
        date={2014},
     journal={Acta Numer.},
      volume={23},
       pages={651\ndash 742},
}

\bib{horn2013matrix}{book}{
      author={Horn, Roger~A.},
      author={Johnson, Charles~R.},
       title={Matrix analysis},
     edition={Second},
   publisher={Cambridge University Press, Cambridge},
        date={2013},
}

\bib{helgaker2014molecular}{book}{
      author={Helgaker, Trygve},
      author={JÃ¸rgensen, Poul},
      author={Olsen, Jeppe},
       title={Configuration-{I}nteraction {T}heory},
   publisher={John Wiley \& Sons, Ltd},
        date={2014},
        ISBN={9781119019572},
}

\bib{holtz2012onmanifolds}{article}{
      author={Holtz, Sebastian},
      author={Rohwedder, Thorsten},
      author={Schneider, Reinhold},
       title={On manifolds of tensors of fixed {TT}-rank},
        date={2012},
     journal={Numer. Math.},
      volume={120},
      number={4},
       pages={701\ndash 731},
}

\bib{Hackbusch2017interconnection}{article}{
      author={Hackbusch, Wolfgang},
      author={Uschmajew, Andr{\'e}},
       title={On the interconnection between the higher-order singular values
  of real tensors},
        date={2017Mar},
     journal={Numerische Mathematik},
      volume={135},
      number={3},
       pages={875\ndash 894},
}

\bib{keller2015qcmaquis}{article}{
      author={Keller, Sebastian},
      author={Dolfi, Michele},
      author={Troyer, Matthias},
      author={Reiher, Markus},
       title={An efficient matrix product operator representation of the
  quantum chemical {H}amiltonian},
        date={2015},
     journal={The Journal of Chemical Physics},
      volume={143},
      number={24},
       pages={244118},
}

\bib{legeza2003controlling}{article}{
      author={Legeza, \"O.},
      author={R\"oder, J.},
      author={Hess, B.~A.},
       title={Controlling the accuracy of the density-matrix
  renormalization-group method: The dynamical block state selection approach},
        date={2003Mar},
     journal={Phys. Rev. B},
      volume={67},
       pages={125114},
}

\bib{legeza2018qc-dmrg-budapest}{article}{
      author={Legeza, {\"O.}},
      author={Veis, L.},
      author={Mosoni, T.},
       title={{QC-DMRG-B}udapest, a program for quantum chemical {DMRG}
  calculations},
        date={2018},
}

\bib{mitrushenkov2001quantum}{article}{
      author={Mitrushenkov, Alexander~O.},
      author={Fano, Guido},
      author={Ortolani, Fabio},
      author={Linguerri, Roberto},
      author={Palmieri, Paolo},
       title={Quantum chemistry using the density matrix renormalization
  group},
        date={2001},
     journal={The Journal of Chemical Physics},
      volume={115},
      number={15},
       pages={6815\ndash 6821},
}

\bib{chan2015block-dmrg}{article}{
      author={Olivares-Amaya, Roberto},
      author={Hu, Weifeng},
      author={Nakatani, Naoki},
      author={Sharma, Sandeep},
      author={Yang, Jun},
      author={Chan, Garnet Kin-Lic},
       title={The ab-initio density matrix renormalization group in practice},
        date={2015},
     journal={The Journal of Chemical Physics},
      volume={142},
      number={3},
       pages={034102},
}

\bib{ostlund1995thermodynamic}{article}{
      author={\"Ostlund, Stellan},
      author={Rommer, Stefan},
       title={Thermodynamic limit of density matrix renormalization},
        date={1995Nov},
     journal={Phys. Rev. Lett.},
      volume={75},
       pages={3537\ndash 3540},
}

\bib{oseledets2009breaking}{article}{
      author={Oseledets, I.~V.},
      author={Tyrtyshnikov, E.~E.},
       title={Breaking the curse of dimensionality, or how to use {SVD} in many
  dimensions},
        date={2009},
     journal={SIAM J. Sci. Comput.},
      volume={31},
      number={5},
       pages={3744\ndash 3759},
}

\bib{rissler2006measuring}{article}{
      author={Rissler, J{\"o}rg},
      author={Noack, Reinhard~M},
      author={White, Steven~R},
       title={Measuring orbital interaction using quantum information theory},
        date={2006},
     journal={Chemical Physics},
      volume={323},
      number={2-3},
       pages={519\ndash 531},
}

\bib{schollwoeck2011density}{article}{
      author={Schollw\"{o}ck, Ulrich},
       title={The density-matrix renormalization group in the age of matrix
  product states},
        date={2011},
     journal={Ann. Physics},
      volume={326},
      number={1},
       pages={96\ndash 192},
}

\bib{szabo1982modern}{book}{
      author={{Szabo}, Attila},
      author={{Ostlund}, Neil~S.},
       title={{Modern quantum chemistry : introduction to advanced electronic
  structure theory}},
        date={1982},
}

\bib{schneider2014tensor}{article}{
      author={Szalay, SzilÃ¡rd},
      author={Pfeffer, Max},
      author={Murg, Valentin},
      author={Barcza, Gergely},
      author={Verstraete, Frank},
      author={Schneider, Reinhold},
      author={Legeza, Ã–rs},
       title={Tensor product methods and entanglement optimization for ab
  initio quantum chemistry},
        date={2015},
     journal={International Journal of Quantum Chemistry},
      volume={115},
      number={19},
       pages={1342\ndash 1391},
}

\bib{silvi2013matrix}{article}{
      author={Silvi, Pietro},
      author={Rossini, Davide},
      author={Fazio, Rosario},
      author={Santoro, Giuseppe~E},
      author={Giovannetti, Vittorio},
       title={Matrix product state representation for slater determinants and
  configuration interaction states},
        date={2013},
     journal={International Journal of Modern Physics B},
      volume={27},
      number={01n03},
       pages={1345029},
}

\bib{shi2018numericalranks}{article}{
      author={{Shi}, Tianyi},
      author={{Townsend}, Alex},
       title={{On the numerical ranks of tensors}},
        date={2018Dec},
     journal={arXiv:1812.09576},
}

\bib{vonLuxburg2007spectralgraph}{article}{
      author={von Luxburg, Ulrike},
       title={A tutorial on spectral clustering},
        date={2007Dec},
     journal={Statistics and Computing},
      volume={17},
      number={4},
       pages={395\ndash 416},
}

\bib{white1992dmrg}{article}{
      author={White, Steven~R.},
       title={Density matrix formulation for quantum renormalization groups},
        date={1992Nov},
     journal={Phys. Rev. Lett.},
      volume={69},
       pages={2863\ndash 2866},
}

\bib{white1993dmrg}{article}{
      author={White, Steven~R.},
       title={Density-matrix algorithms for quantum renormalization groups},
        date={1993Oct},
     journal={Phys. Rev. B},
      volume={48},
       pages={10345\ndash 10356},
}

\bib{white1999abinitio}{article}{
      author={White, Steven~R.},
      author={Martin, Richard~L.},
       title={Ab initio quantum chemistry using the density matrix
  renormalization group},
        date={1999},
     journal={The Journal of Chemical Physics},
      volume={110},
      number={9},
       pages={4127\ndash 4130},
}

\bib{Wouters2014CheMPS2}{article}{
      author={Wouters, Sebastian},
      author={Poelmans, Ward},
      author={Ayers, Paul~W.},
      author={{Van Neck}, Dimitri},
       title={Che{MPS2}: a free open-source spin-adapted implementation of the
  density matrix renormalization group for ab initio quantum chemistry},
        date={2014},
     journal={Computer Physics Communications},
      volume={185},
      number={6},
       pages={1501\ndash 1514},
}

\end{biblist}
\end{bibdiv}
\end{small}

\end{document}